\documentclass[review,onefignum,onetabnum]{siamart190516}

\usepackage{lipsum}
\usepackage{amsfonts}
\usepackage{graphicx}
\usepackage{epstopdf}
\usepackage{algorithmic}
\ifpdf
  \DeclareGraphicsExtensions{.eps,.pdf,.png,.jpg}
\else
  \DeclareGraphicsExtensions{.eps}
\fi

\newsiamremark{hypothesis}{Hypothesis}
\crefname{hypothesis}{Hypothesis}{Hypotheses}
\newsiamthm{claim}{Claim}

\title{Convergence Analysis of Block Newton Methods \\ for 1D Shallow Neural Network Approximation\thanks{This work was performed under the auspices of the U.S. Department of Energy by Lawrence Livermore National Laboratory under Contract DE-AC52-07NA27344. This work was supported by the NNSA Advanced Simulation and Computing (ASC) Program.  LLNL-JRNL-2015820.}}

\author{Zhiqiang Cai\thanks{Department of Mathematics, Purdue University, West Lafayette, IN
    (\email{caiz@purdue.edu},  \email{adoktoro@purdue.edu}, \email{herre125@purdue.edu}).}
\and Anastassia Doktorova\footnotemark[2]
\and Robert D. Falgout\thanks{Lawrence Livermore National Laboratory, Livermore, CA (\email{rfalgout@llnl.gov})}
\and César Herrera\footnotemark[2]
}

\usepackage{amsopn}

\usepackage{amsmath,rotating}

\usepackage[utf8]{inputenc}
\usepackage{bm}
\usepackage{mathtools}
\usepackage{indentfirst}
\usepackage{tcolorbox}
\usepackage{color}
\usepackage[export]{adjustbox}
\usepackage{scalerel}
\usepackage{tikz}
\usepackage{geometry}
\usepackage{multicol}
\usetikzlibrary{decorations.pathreplacing}
\usepackage{algorithmic}
\Crefname{ALC@unique}{Line}{Lines}

\makeatletter
\let\orig@nomath\@nomath
\let\orig@scriptsize\scriptsize
\renewcommand{\scriptsize}{%
  \ifmmode
    \let\@nomath\@gobble 
    \orig@scriptsize
    \let\@nomath\orig@nomath 
  \else
    \orig@scriptsize
  \fi}
\makeatother

\newcommand{\N}{\mathbb{N}}
\usepackage{graphicx}
\usepackage{diagbox}
\usepackage{pifont}
\newcommand{\vertiii}[1]{{\left\vert\kern-0.25ex\left\vert\kern-0.25ex\left\vert #1 
    \right\vert\kern-0.25ex\right\vert\kern-0.25ex\right\vert}}

\newcommand{\btheta}{\mbox{\boldmath${\theta}$}}

\usepackage{tabularray}
\usepackage{caption}
\usepackage{subcaption}
\usepackage{soul}

\usepackage{enumitem}
\setlist[itemize]{left=16pt} 

\def\bb{{\bf b}}
\def\bB{{\bf B}}
\def\bc{{\bf c}}
\def\bd{{\bf d}}
\def\bD{{\bf D}}

\def\bg{{\bf g}}

\def\bv{{\bf v}}

\def\bH{{\bf H}}
\def\bJ{{\bf J}}
\def\balpha{\boldsymbol{\alpha}}
\def\bbeta{\boldsymbol{\beta}}

\def\cM{{\cal M}}

\DeclareMathOperator*{\argmin}{arg\,min}

\begin{document}

\maketitle
\begin{abstract}
This paper analyzes local convergence of the block Newton (BN) method introduced in \cite{cai2024fast, cai2024fast2} for one-dimensional shallow neural network approximation to functions and diffusion-reaction problems. The BN method  consists of the $2\times 2$ block nonlinear Gauss-Seidel, linear Gauss-Seidel, or Jacobi method for outer iteration and the Newton method for inner iteration. The blocks are corresponding to the linear and the nonlinear parameters. Under some reasonable assumptions, we establish local convergence of the BN methods as well as the reduced BN (rBN) method for one-dimensional diffusion–reaction problems and least-squares function approximation. Unlike common optimization methods, the rBN allows for the reduction of the number of parameters during the optimization process when some neurons contribute little to the approximation or are at nearly optimal locations.

\end{abstract}

\begin{keywords}
    Neural network, Elliptic problems, Least-Squares approximation, Newton's method, Local convergence analysis
\end{keywords}

\section{Introduction}

One dimensional ReLU shallow neural network (NN) with $n$ neurons generates a set of continuous piecewise linear functions. 
Specifically, the set with restriction of the biases in the interval $I = (0,1)$ is given by
\begin{equation*}\label{ShallowNN}
    {\cal M}_n(I) = \left\{ \alpha + \sum_{i=0}^{n} c_i \, \sigma(x - b_i) \; : \; \alpha \in \mathbb{R},\; c_i \in \mathbb{R},\; 0 = b_0 < b_1 < \cdots < b_n < b_{n+1} = 1 \right\},
\end{equation*}
where $\sigma(t) = \max\{0, t\}$ is the ReLU activation function. Denote by $\bc = (c_0, \dots, c_n)^T \in \mathbb{R}^{n+1}$ and $\bb = (b_1, \dots, b_n)^T \in \mathbb{R}^n$ the respective linear parameters and nonlinear parameters, and denote by $\btheta = (c_0, c_1, \dots, c_n, b_1, \dots, b_n)^T \in \mathbb{R}^{2n+1}$ all parameters. Notice that the weights of all hidden layer is chosen to be one by normalization (see \cite{Liu_Cai_Chen_2022}). Each function in ${\cal M}_n(I)$ 
\[
v(x; \bc,\bb) = \alpha + \sum_{i=0}^{n} c_i \, \sigma(x - b_i)
\]
depends on the parameters $\bc$ and $\bb$ and is a piecewise linear function of $x\in [0,1]$ with respect to the partition by the breaking (mesh) points: $0 = b_0 < b_1 < \cdots < b_n < b_{n+1} = 1$. 

It is known (see, e.g., \cite{Liu_Cai_Chen_2022}) that the set ${\cal M}_n(I)$ is equivalent to the free-knot splines (FKS) (see \cite{Schumaker}), where FKS utilizes local hat basis functions. FKS can substantially enhance the approximation order and reduce the number of degrees of freedom for non-smooth functions (see \cite{Burchard74} and the discussion in \cite{cai2024fast}). As an example of this, the order of the best approximation to $f(x)=x^{\alpha}$ ($0 < \alpha < 1$) on $I$ is merely $\alpha<1$ (i.e., ${\cal O}(n^{-\alpha})$) when using finite elements on a fixed uniform mesh, whereas for FKS, the order becomes one (i.e., ${\cal O}(n^{-1})$) no matter how small the exponent $\alpha>0$ is (see, e.g., \cite{daubechies, Schumaker}). This is a huge improvement in approximation.

Despite the remarkable approximation capability of FKS for non-smooth functions, there are two essential difficulties that have led numerical analysts moving away from FKS: (1)~no successful extension of FKS to two or higher dimensions has been achieved, and (2)~determining the optimal knot locations (the nonlinear parameters $\mathbf{b}$) results in a high-dimensional, non-convex optimization problem, that is computationally expensive and hence dismisses its benefit in approximation. To the best of our knowledge, there are no available efficient optimization schemes that enable the competitiveness of FKS. While the first issue may be addressed by employing neural networks due to their global supported basis functions, the second issue still remains a major challenge.


To address this challenge, a major advance on fast iterative solver was recently made in \cite{cai2024fast, cai2024fast2} for solving non-convex optimization problems arising from shallow ReLU NN approximation to a given function or solutions of elliptic differential equations in one dimension. Specifically, a well-designed damped block Newton (dBN) method was developed. First, the dBN adopts a classical outer-inner iterative strategy (see, e.g., \cite{Ainsworth2020PlateauPI, Ainsworth2022, pmlr-v107-cyr20a, Golub1973, separable3, park2022neuron, separable2}), alternating between updates of the linear and nonlinear parameters. Second, to solve the resulting dense, ill-conditioned linear systems due to the global basis functions of NNs, the dBN uses the fact that the exact inversion of those matrices can be represented in terms of products of sparse matrices (see \cite{cai2024fast2}). Third, the dBN deals directly with singularities of the Hessian for the nonlinear parameters by removing neurons whose linear parameters are small or whose nonlinear parameters have reached nearly optimal locations. As a result, the computational cost per iteration of the dBN is ${\cal O}(n)$, and numerical experiments show that this method is capable of moving mesh points effectively and efficiently. Beyond strong one dimensional results, the methodology of the dBN is conceptually promising for higher dimensions, as it demonstrates how to design iterative solvers that exploit the problem structure and the approximation and geometric meanings of the NN parameters.

The purpose of this paper is to provide a theoretical guarantee on why this sophisticated dBN moves the mesh points efficiently. This will be done by analyzing local convergence of block Newton (BN) methods. 
The BN method consists of the $2\times 2$ block nonlinear Gauss-Seidel, linear Gauss-Seidel, or Jacobi method for outer iteration and the Newton method for inner iteration. The blocks are corresponding to the linear and the nonlinear parameters. 
By following the machinery in \cite{ortega1970iterative} on local convergence of the componentwise Gauss-Seidel method, we first develop a local convergence theory for the block Newton methods applicable to shallow ReLU NNs in both one and multiple dimensions, provided that the Hessian matrix at a critical point is symmetric positive definite (SPD) and that the $2\times 2$ block nonlinear Gauss-Seidel, linear Gauss-Seidel, or Jacobi matrix is invertible. 

By expressing the BN method as a fixed-point iteration, local convergence of the BN method is established by showing that a norm of the corresponding Jacobian at the critical point is strictly less than one. Note that derivation of the Jacobian matrix is non-trivial (see \cref{BN_lemma1}). 
To guarantee feasibility of each Newton step, the BN method is modified to the reduced BN (rBN) method that allows a reduction in the number of parameters during the optimization process. Local convergence of the rBN is justified by showing that some nonlinear parameters are at nearly optimal locations.

The paper is structured as follows. \Cref{Section_BN_method} introduces BN methods with three different outer iteration methods. Local convergence analysis of these methods are presented in \Cref{Section_convergence}. Sufficient conditions on SPD of the Hessian for one-dimensional problems are derived in \Cref{Apps}. \Cref{implementatioSection} discusses the reduced Block Newton method and some conclusions and remarks are presented in \Cref{sec:conc}.

\section{Block Newton Methods}\label{Section_BN_method} Let $\btheta =\begin{pmatrix} \bc \\ \bb \end{pmatrix}$, where $\bc \in \mathbb{R}^{n+1}$ and $\bb \in \mathbb{R}^{n}$.
Given an open set ${\cal D} \subseteq \mathbb{R}^{2n+1}$ and a twice continuously differentiable function $F  = F(\btheta) = F(\bc, \bb):  
{\cal D} \rightarrow \mathbb{R}$, we aim to find a minimizer $\btheta^* = \begin{pmatrix} \bc^{*} \\ \bb^{*} \end{pmatrix} \in {\cal D}$ such that 
 \begin{equation*}\label{min_general}
     F(\btheta^{*})  = \min_{\bm{\scriptstyle\theta} \in {\cal D}}F(\btheta).
 \end{equation*}
Optimality conditions imply that $\btheta^{*}$ satisfies the system of nonlinear algebraic equations
\begin{equation}\label{opt_conditions}
 \nabla_{\scriptsize\btheta} F\left(\btheta \right) = \begin{pmatrix}  \nabla_{\bc} F\left(\btheta\right)
 \\[2mm] \nabla_{\bb} F\left(\btheta\right)\end{pmatrix} 
 = \begin{pmatrix}
 {\bf 0} \\[2mm] {\bf 0}\end{pmatrix}
\end{equation}
where $\nabla_{\bc}$ and $\nabla_{\bb}$ denote the gradients of $F(\btheta)$ with respect to $\bc$ and $\bb$, respectively. The Hessian matrix is given by 
\begin{equation*}\label{fullHessian}
\nabla^2_{\scriptsize\btheta}F(\btheta) =  \begin{pmatrix} \nabla^2_{\bc\bc}F(\btheta) & \nabla^2_{\bc \bb}F(\btheta)\\[2mm] \nabla^2_{\bb \bc}F(\btheta) & \nabla^2_{\bb\bb}F(\btheta)  \end{pmatrix}
=\begin{pmatrix} {\cal H}_{11}(\btheta) & {\cal H}_{12}(\btheta)\\[2mm] {\cal H}_{21}(\btheta) & {\cal H}_{22}(\btheta)  \end{pmatrix}
     \in \mathbb{R}^{(2n+1) \times (2n+1)},
\end{equation*}
where ${\cal H}_{ij}(\btheta)$ for $i,j=1,2$ are given by
\begin{equation*}\label{defOperatos}
    {\cal H}_{11}(\btheta) = \nabla^2_{\bc\bc}F(\btheta), \quad
{\cal H}_{12}(\btheta) = \nabla^2_{\bc \bb}F(\btheta), \quad
{\cal H}_{21}(\btheta) = \nabla^2_{\bb \bc}F(\btheta), \quad
{\cal H}_{22}(\btheta) = \nabla^2_{\bb\bb}F(\btheta).
\end{equation*}


The nonlinear system in \cref{opt_conditions} can be solved using Newton's method, though this may be computationally expensive. To reduce cost, we can use a block Newton (BN) method by performing an outer-inner iteration that alternates between the variables $\bc$ and $\bb$. For the outer iteration, one may use a Gauss--Seidel or Jacobi scheme, and apply a Newton iteration to each block during the inner solve.

More specifically, let $\left(\bc^{(k)}, \bb^{(k)}\right)$ denote the current iterate. Then the block nonlinear Gauss-Seidel (see, e.g., \cite{ortega1970iterative}) method, as the outer iteration, computes the new iterate $\left(\bc^{(k+1)}, \bb^{(k+1)}\right)$ as follows:
\begin{enumerate}
\item \textit{Update the variable $\bc^{(k+1)}$}:
\begin{equation*}
\bc^{(k+1)} = \bc^{(k)} - \left[{\cal H}_{11}\left(\bc^{(k)}, \bb^{(k)}\right)\right]^{-1} \nabla_{\bc} F\left(\bc^{(k)}, \bb^{(k)}\right).
\end{equation*}
\item \textit{Update the variable $\bb^{(k+1)}$}:
\begin{equation*}
\bb^{(k+1)} = \bb^{(k)} - \left[ {\cal H}_{22}\left(\bc^{(k+1)}, \bb^{(k)}\right)\right]^{-1} \nabla_{\bb} F\left(\bc^{(k+1)}, \bb^{(k)}\right).
\end{equation*}
\end{enumerate}
In other words, the new iterate $\left(\bc^{(k+1)},\bb^{(k+1)}\right)$ is the solution of the block diagonal system of nonlinear algebraic equations
\begin{equation}\label{GS}
 \begin{pmatrix} {\cal H}_{11}\left(\bc^{(k)}, \bb^{(k)}\right) & {\bf 0} \\[2mm] 
 {\bf 0} & {\cal H}_{22}\left(\bc^{(k+1)}, \bb^{(k)}\right)
 \end{pmatrix}  
 \begin{pmatrix}
     \bc^{(k+1)} - \bc^{(k)} \\[2mm] \bb^{(k+1)} - \bb^{(k)}
 \end{pmatrix}
 = -\begin{pmatrix}
     \nabla_{\bc} F\left(\bc^{(k)}, \bb^{(k)}\right) \\[2mm] \nabla_{\bb} F\left(\bc^{(k+1)}, \bb^{(k)}\right)
 \end{pmatrix},
\end{equation}
which can be solved sequentially by computing two systems of linear algebraic equations.

This nonlinear Gauss-Seidel method differs from the classical linear Gauss--Seidel method, where the new iterate is obtained by solving the block lower-triangular system of linear equations
\begin{equation}\label{GS2}
 \begin{pmatrix} {\cal H}_{11}\left(\bc^{(k)}, \bb^{(k)}\right) & {\bf 0} \\[2mm] 
 {\cal H}_{21}(\bc^{(k)}, \bb^{(k)}) & {\cal H}_{22}\left(\bc^{(k)}, \bb^{(k)}\right)
 \end{pmatrix}  
 \begin{pmatrix}
     \bc^{(k+1)} - \bc^{(k)} \\[2mm] \bb^{(k+1)} - \bb^{(k)}
 \end{pmatrix}
 = -\begin{pmatrix}
     \nabla_{\bc} F\left(\bc^{(k)}, \bb^{(k)}\right) \\[2mm] \nabla_{\bb} F\left(\bc^{(k)}, \bb^{(k)}\right)
 \end{pmatrix}.
\end{equation}
On the other hand, in the Jacobi method, the new iterate $\bigl(\bc^{(k+1)}, \bb^{(k+1)}\bigr)$ solves the block diagonal system of linear algebraic equations

\begin{equation}\label{Jacobi}
 \begin{pmatrix} {\cal H}_{11}\left(\bc^{(k)}, \bb^{(k)}\right) & {\bf 0} \\[2mm] 
 {\bf 0} & {\cal H}_{22}\left(\bc^{(k)}, \bb^{(k)}\right)
 \end{pmatrix}  
 \begin{pmatrix}
     \bc^{(k+1)} - \bc^{(k)} \\[2mm] \bb^{(k+1)} - \bb^{(k)}
 \end{pmatrix}
 = -\begin{pmatrix}
     \nabla_{\bc} F\left(\bc^{(k)}, \bb^{(k)}\right) \\[2mm] \nabla_{\bb} F\left(\bc^{(k)}, \bb^{(k)}\right)
 \end{pmatrix}.
\end{equation}



    

\section{Convergence Analysis}\label{Section_convergence}  For brevity, we refer to the schemes defined in \cref{GS}, \cref{GS2}, and \cref{Jacobi} as NL-GS, L-GS, and JB, respectively. This section presents analytic tools for deriving local convergence conditions for the BN methods introduced in the previous section. To this end, we introduce the following assumption:
\begin{itemize}
    \item \noindent\textbf{Invertibility Assumption:} there exists an open set $\mathcal{O} \subseteq \mathcal{D}$ such that $\mathcal{H}_{11}(\boldsymbol{\theta})$ is invertible; moreover, $\mathcal{H}_{22}(G_1(\boldsymbol{\theta}), \mathbf{b})$ is invertible for NL-GS and $\mathcal{H}_{22}(\boldsymbol{\theta})$ is invertible for L-GS and JB.
\end{itemize}

Define the mapping 
\[
G(\btheta) = \begin{pmatrix} G_1(\btheta)\\[1mm] G_2(\btheta) \end{pmatrix}, 
\]
where $G_1: {\cal D} \to \mathbb{R}^{n+1}$ and $G_2: {\cal D} \to \mathbb{R}^n$ are respectively given by
\begin{equation}\label{G1BN}
    G_1(\btheta) =
\bc - {\cal H}_{11}^{-1}(\btheta)\,\nabla_{\bc} F(\btheta),
\end{equation}
and by
\begin{equation}\label{GBN}
\begin{aligned}
G_2(\btheta) &=
\begin{cases}
\bb - {\cal H}_{22}^{-1}\!\left(G_1(\btheta), \bb\right)\nabla_{\bb}F\!\left(G_1(\btheta), \bb\right), & \text{for NL-GS}, \\[2mm]
\bb - {\cal H}_{22}^{-1}(\btheta)\,\biggl(\nabla_{\bb}F(\btheta) + {\cal H}_{21}(\btheta)\,\bigl(G_1(\btheta) - \bc\bigr)\biggr), & \text{for L-GS}, \\[2mm]
\bb - {\cal H}_{22}^{-1}(\btheta)\,\nabla_{\bb}F(\btheta), & \text{for JB}.
\end{cases}
\end{aligned}
\end{equation}
Then the BN methods can be expresed as the fixed-point iteration of $G(\btheta)$, i.e., 
\begin{equation}\label{fixedPoint}
    \btheta^{k+1} = G(\btheta^{k}) \quad\mbox{for $k \in \N=\{ 0, 1, 2,\dots\}$}.
\end{equation}

Denote by $\bJ_{G}(\btheta) \in \mathbb{R}^{(2n+1) \times (2n+1)}$ the Jacobian matrix of $G$ at $\btheta$.
A sufficient condition for local convergence of the fixed-point iteration in \cref{fixedPoint} was given in Theorem 10.1.3 of \cite{ortega1970iterative}. For the convenience of readers, we state it below and provide its proof. 

\begin{theorem}[Ostroswki]\label{thmOS} 
Suppose that $G : {\cal O} \rightarrow \mathbb{R}^{2n+1}$ has a fixed point $\btheta^{*} \in {\cal O}$ and that the mapping $G$ is differentiable at $\btheta^{*}$. Denote by $\|\cdot\|$ a norm in $\mathbb{R}^{2n+1}$, if $\|{\bf J}_{G}(\btheta^{*})\| = \sigma < 1$, then the fixed-point iteration in \cref{fixedPoint} converges locally to $\btheta^{*}$. 
\end{theorem}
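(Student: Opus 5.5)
The plan is the classical Ostrowski argument: use differentiability at the fixed point to get a local contraction-type estimate in the given norm, then iterate it.

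First, fix $\epsilon>0$ with $\sigma+\epsilon<1$, for instance $\epsilon=(1-\sigma)/2$. Since $G$ is differentiable at $\btheta^{*}$ and $G(\btheta^{*})=\btheta^{*}$, there is $\delta>0$ such that the ball $S=\{\btheta:\|\btheta-\btheta^{*}\|<\delta\}$ is contained in ${\cal O}$ (possible because ${\cal O}$ is open) and, for all $\btheta\in S$,
\[
\bigl\|G(\btheta)-\btheta^{*}-\bJ_{G}(\btheta^{*})(\btheta-\btheta^{*})\bigr\|\le \epsilon\,\|\btheta-\btheta^{*}\|.
\]
For the operator norm of $\bJ_G(\btheta^*)$ we use the norm induced by $\|\cdot\|$. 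This matches the statement's $\|\bJ_G(\btheta^*)\|=\sigma$, assuming the paper means the induced norm, which is standard.

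Second, the triangle inequality gives, for every $\btheta\in S$,
\[
\|G(\btheta)-\btheta^{*}\|\le \bigl(\|\bJ_{G}(\btheta^{*})\|+\epsilon\bigr)\|\btheta-\btheta^{*}\| = (\sigma+\epsilon)\|\btheta-\btheta^{*}\|.
\]
In particular $G(S)\subseteq S$, since $\sigma+\epsilon<1$.

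Third, an induction on $k$ shows that for any $\btheta^{0}\in S$ all iterates $\btheta^{k}=G(\btheta^{k-1})$ stay in $S$. This is exactly the step where well-definedness of the iteration needs the invariance $G(S)\subseteq S$, because $G$ is only defined on ${\cal O}$. The same induction gives
\[
\|\btheta^{k}-\btheta^{*}\|\le(\sigma+\epsilon)^{k}\|\btheta^{0}-\btheta^{*}\|,
\]
which tends to zero. This is local convergence, in fact at least linear with rate $\sigma+\epsilon$.

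None of this is hard. The only point needing care is choosing $\delta$ small enough that $S\subseteq{\cal O}$ and that the differentiability estimate holds on $S$, so that the iterates never leave the domain where $G$ is defined.
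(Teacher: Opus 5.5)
Your proof is correct and follows essentially the same route as the paper's. Both choose $\epsilon<1-\sigma$ and use differentiability at $\btheta^*$ to get the linearization estimate on a ball inside ${\cal O}$; the triangle inequality then gives the $(\sigma+\epsilon)$ contraction toward $\btheta^*$, and induction keeps the iterates in the ball and yields geometric convergence. Your explicit remarks, that $\|\bJ_G(\btheta^*)\|$ must be the induced operator norm and that invariance of the ball is what keeps the iteration well defined, are points the paper leaves implicit.
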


\begin{proof}
For any $0<\epsilon < 1-\sigma$, the assumption on the differentiability of $G$ at $\btheta^{*}$ implies that there exists a $\delta > 0$ neighbourhood centered at $\btheta^{*}$, ${\cal B}(\btheta^{*}; \delta) = \{\btheta \in \mathbb{R}^{2n+1} : \|\btheta - \btheta^{*}\| < \delta\} \subset {\cal O}$, such that 
\begin{equation}\label{3.4a}
\left\|G(\btheta) - \left[G(\btheta^{*}) + \bJ_G(\btheta^{*})(\btheta - \btheta^{*})\right]\right\|
        \leq \epsilon\|\btheta - \btheta^{*}\| \quad \text{for all } \btheta \in {\cal B}(\btheta^{*}; \delta).    
\end{equation}
For any $k\in \N$, the assumption that $\btheta^{*}=G(\btheta^{*})$ and \cref{fixedPoint} give
\[
\btheta^{k+1}- \btheta^{*}= G(\btheta^{k})- G(\btheta^{*})= \biggl(G(\btheta^k) - \left[G(\btheta^{*}) + \bJ_G(\btheta^{*})(\btheta^k - \btheta^{*})\right]\biggr) + \bJ_G(\btheta^{*})(\btheta^k - \btheta^{*}),
\]
which, together with the triangle inequality, the assumption that $\btheta^k\in {\cal B}(\btheta^{*}; \delta)$, and \cref{3.4a}, implies
\[
\|\btheta^{k+1} - \btheta^{*}\| \leq (\epsilon + \sigma)\|\btheta^k - \btheta^{*}\| < \|\btheta^k - \btheta^{*}\|.
\]
If the initial $\btheta^0$ belongs to ${\cal B}(\btheta^{*}; \delta)$, then the second inequality implies that $\btheta^k\in {\cal B}(\btheta^{*}; \delta)$ by induction. Hence, we have 
$\|\btheta^{k+1} - \btheta^{*}\| \leq (\epsilon + \sigma)^k\|\btheta^0 - \btheta^{*}\|$, which proves the theorem.
\end{proof}

Next, let  
\[
B(\btheta) = \begin{pmatrix} {\cal H}_{11}(\btheta) & {\bf 0}\\[1mm]{\cal H}_{21}(\btheta) &{\cal H}_{22}(\btheta) \end{pmatrix} \,\mbox{ and }\,  B_1(\btheta)= \begin{pmatrix} {\cal H}_{11}(\btheta) & {\bf 0}\\[1mm]{\bf 0}  &{\cal H}_{22}(G_1(\btheta),\bb)  \end{pmatrix}. 
\]
Assume that $\btheta^*$ is a minimizer of $F(\btheta)$, then  
Taylor's expansion of $\nabla_{\scriptsize\btheta}F(\btheta)$ at $\btheta^*$ gives
\begin{equation}\label{Taylor1}
    \nabla_{\scriptsize\btheta}F(\btheta) =  
    \nabla^2_{\scriptsize\btheta}F(\btheta^*) (\btheta-\btheta^*) + R(\btheta;\btheta^*),
\end{equation}
where $R(\btheta;\btheta^*)$ is the remainder satisfying $\lim\limits_{{\scriptsize\btheta} \to {\scriptsize\btheta^{*}}} \|R(\btheta;\btheta^*)\| \big/ \|\btheta - \btheta^*\| = 0$. Similarly, expanding $\nabla_{\bb}F(G_1(\btheta), \bb)$ about $\btheta$ and using \cref{G1BN}, we have
\[ 
    \nabla_{\bb}F(G_1(\btheta), \bb) 
    =\nabla_{\bb}F(\btheta) -{\cal H}_{21}(\btheta){\cal H}_{11}^{-1}(\btheta) \nabla_{\bc}F(\btheta) + \tilde{R}(\btheta),
\] 
where the remainder $\tilde{R}(\btheta)$ satisfies $\lim\limits_{{\scriptsize\btheta} \to {\scriptsize\btheta^{*}}} \dfrac{\|\tilde{R}(\btheta)\|}{ \|\btheta - \btheta^*\|} = 0$. Let $B_2(\btheta) = \begin{pmatrix}I & {\bf 0} \\[2mm]   - {\cal H}_{21}(\btheta){\cal H}_{11}^{-1}(\btheta) & I         \end{pmatrix}$, then combining with \cref{Taylor1} yields
\begin{equation}\label{Taylor2} 
\begin{pmatrix} \nabla_{\bc}F(\btheta) \\[2mm]            \nabla_{\bb}F(G_1(\btheta), \bb)          \end{pmatrix} = B_2(\btheta) \nabla_{\scriptsize{\btheta}}F(\btheta) + \begin{pmatrix} {\bf 0} \\[2mm]  \tilde{R}(\btheta) \end{pmatrix}  
= B_2(\btheta) \nabla^2_{\scriptsize{\btheta}}F(\btheta^*) (\btheta-\btheta^*) + \hat{R}(\btheta;\btheta^*),
\end{equation}
where $\hat{R}(\btheta;\btheta^*)= B_2(\btheta) {R}(\btheta;\btheta^*) + \begin{pmatrix} {\bf 0} \\[2mm]  \tilde{R}(\btheta) \end{pmatrix}$.

The next lemma provides a formula for $\bJ_G(\btheta^{*})$ when $\nabla^2_{\scriptsize{\btheta}}F(\btheta^{*})$ is symmetric and positive definite (SPD) for Gauss--Seidel schemes. For the componentwise Gauss--Seidel method, this result was proved in Theorem 10.3.3 of \cite{ortega1970iterative}. Here, we use block Gauss--Seidel methods.

\begin{lemma}\label{BN_lemma1}
    Let $G: {\cal O} \rightarrow \mathbb{R}^{2n+1}$ be the mapping defined in \cref{GBN} for the L-GS or NL-GS, and let $\btheta^{*} \in {\cal O}$ be a fixed point of $G$. Assume that $\nabla_{\scriptsize\btheta}^2F(\btheta^{*})$ is SPD. Then
    \begin{equation}\label{jacG}
        \bJ_{G}(\btheta^{*}) = I_{2n+1 } - B^{-1}(\btheta^{*})\nabla_{\scriptsize\btheta}^2F(\btheta^{*}),
    \end{equation}
    where $I_{2n+1}$ is the order-$(2n+1)$ identity matrix.
\end{lemma}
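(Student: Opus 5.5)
The plan is to compute the Jacobian of $G$ at $\btheta^*$ directly from the definition of differentiability. Rather than differentiating $\mathcal{H}_{11}^{-1}$ and $\mathcal{H}_{22}^{-1}$ (which would need third derivatives of $F$), we write $G(\btheta)-\btheta^*$ as a linear map applied to $\btheta-\btheta^*$ plus an $o(\|\btheta-\btheta^*\|)$ remainder. First note that $\btheta^*$ is a fixed point, and $\mathcal{H}_{11}$, $\mathcal{H}_{22}$ are invertible there. So $\btheta^*=G(\btheta^*)$ forces $\nabla_{\bc}F(\btheta^*)=0$ and then $\nabla_{\bb}F(\btheta^*)=0$; for L-GS the correction term vanishes since $G_1(\btheta^*)=\bc^*$. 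Hence $\btheta^*$ is a critical point and the Taylor expansions \cref{Taylor1} and \cref{Taylor2} are available. The SPD assumption guarantees that $\mathcal{H}_{11}(\btheta^*)$ and $\mathcal{H}_{22}(\btheta^*)$ are SPD, hence invertible, so $B(\btheta^*)$ is invertible. By continuity of second derivatives, $B(\btheta)^{-1}$ and $B_1(\btheta)^{-1}$ are bounded near $\btheta^*$.

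\textbf{L-GS.} Rewrite the fixed-point map in the form $B(\btheta)\bigl(G(\btheta)-\btheta\bigr)=-\nabla_{\btheta}F(\btheta)$, which is exactly \cref{GS2}. Substituting \cref{Taylor1} gives
\[
G(\btheta)-\btheta^* = \bigl(I-B^{-1}(\btheta)\nabla^2_{\btheta}F(\btheta^*)\bigr)(\btheta-\btheta^*) - B^{-1}(\btheta)R(\btheta;\btheta^*).
\]
Next replace $B^{-1}(\btheta)$ by $B^{-1}(\btheta^*)$ in the first term. The error is $\bigl(B^{-1}(\btheta)-B^{-1}(\btheta^*)\bigr)\nabla^2_{\btheta}F(\btheta^*)(\btheta-\btheta^*)$, which is $o(\|\btheta-\btheta^*\|)$ because $B^{-1}$ is continuous at $\btheta^*$. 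The second term is $o(\|\btheta-\btheta^*\|)$ because $B^{-1}$ is bounded near $\btheta^*$. This yields \cref{jacG}.

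\textbf{NL-GS.} Here $B_1(\btheta)\bigl(G(\btheta)-\btheta\bigr)=-\bigl(\nabla_{\bc}F(\btheta),\,\nabla_{\bb}F(G_1(\btheta),\bb)\bigr)^T$. Applying \cref{Taylor2} gives
\[
G(\btheta)-\btheta^*=\bigl(I-B_1^{-1}(\btheta)B_2(\btheta)\nabla^2_{\btheta}F(\btheta^*)\bigr)(\btheta-\btheta^*)-B_1^{-1}(\btheta)\hat R(\btheta;\btheta^*).
\]
The key algebraic identity to verify is $B_1^{-1}(\btheta^*)B_2(\btheta^*)=B^{-1}(\btheta^*)$. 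Since $G_1(\btheta^*)=\bc^*$, we have $B_1(\btheta^*)=\mathrm{diag}\bigl(\mathcal{H}_{11}(\btheta^*),\mathcal{H}_{22}(\btheta^*)\bigr)$. The identity then reduces to $B_2(\btheta^*)^{-1}B_1(\btheta^*)=B(\btheta^*)$, which is a direct block multiplication:
\[
\begin{pmatrix} I & 0\\ \mathcal{H}_{21}\mathcal{H}_{11}^{-1} & I\end{pmatrix}\begin{pmatrix}\mathcal{H}_{11}&0\\0&\mathcal{H}_{22}\end{pmatrix}=\begin{pmatrix}\mathcal{H}_{11}&0\\ \mathcal{H}_{21}&\mathcal{H}_{22}\end{pmatrix}.
\]
The continuity and boundedness arguments then go through as in the L-GS case, using continuity of $B_1$ and $B_2$ and the fact that $G_1$ is continuous.

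The main obstacle is controlling the remainder $\tilde R$ in \cref{Taylor2}, i.e.\ checking that $\|\tilde R(\btheta)\|=o(\|\btheta-\btheta^*\|)$. It is the remainder of a first-order expansion of $\nabla_{\bb}F(\cdot,\bb)$ in the $\bc$-variable, with step $G_1(\btheta)-\bc=-\mathcal{H}_{11}^{-1}\nabla_{\bc}F(\btheta)=O(\|\btheta-\btheta^*\|)$. Using the integral mean value form, $\tilde R$ equals $\int_0^1\bigl[\mathcal{H}_{21}(\bc+t(G_1-\bc),\bb)-\mathcal{H}_{21}(\btheta)\bigr]dt\,(G_1-\bc)$. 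By uniform continuity of $\mathcal{H}_{21}$ near $\btheta^*$, this is $o(1)\cdot O(\|\btheta-\btheta^*\|)$. I would write this out explicitly, since the paper states it without proof.
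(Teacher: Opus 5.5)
Your proposal is correct and is essentially the paper's proof. Both verify the defining limit by writing $B(\btheta)\bigl(G(\btheta)-\btheta\bigr)=-\nabla_{\btheta}F(\btheta)$ for L-GS (and the analogous identity with $B_1(\btheta)$ for NL-GS), inserting the expansions \cref{Taylor1} and \cref{Taylor2}, and using that $B_2^{-1}(\btheta)B_1(\btheta)\to B(\btheta^*)$ with $B^{-1}$ and $B_1^{-1}$ bounded near $\btheta^*$; the paper multiplies through by $B(\btheta^*)$ and uses $B(\btheta)-B(\btheta^*)\to 0$, whereas you use continuity of $B^{-1}$ directly, which is equivalent. Your extra steps correctly fill in points the paper only asserts: deducing $\nabla_{\btheta}F(\btheta^*)=0$ from the fixed-point property, which \cref{Taylor1} needs in order to have no constant term, and the integral-form estimate $\|\tilde R(\btheta)\|=o(\|\btheta-\btheta^*\|)$.
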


\begin{proof}
By definition of differentiability, to show the validity of \cref{jacG}, it suffices to show that if $\bJ_{G}(\btheta^{*})$ is given in \cref{jacG}, then
\begin{equation}\label{lim1}  
\lim_{\bm{\scriptstyle\theta} \to \bm{\scriptstyle\theta}^{*}} 
        \frac{\left\|G(\btheta) - \left[G(\btheta^*) + \bJ_{G}(\btheta^{*})(\btheta - \btheta^*)\right]\right\| }{\|\btheta - \btheta^*\|} = 0.
\end{equation}
To this end, for $\bJ_{G}(\btheta^{*})$ given in \cref{jacG}, we have  
\[ 
B(\btheta^{*}) \bJ_{G}(\btheta^{*}) 
= \begin{pmatrix} {\bf 0} & -{\cal H}_{12}(\btheta^{*})\\ {\bf 0} & {\bf 0} \end{pmatrix} \quad\mbox{and}\quad B(\btheta^{*}) - B(\btheta^{*}) \bJ_{G}(\btheta^{*}) = \nabla_{\scriptsize\btheta}^2F(\btheta^*),
\] 
which, together with the assumption that $\btheta^*$ is a fixed-point of $G(\btheta)$, implies
\begin{equation*}\label{3.9a}
  {\bf a}(\btheta,\btheta^{*})\equiv B(\btheta^{*}) \bigg(G(\btheta) -\big[G(\btheta^*) + \bJ_{G}(\btheta^{*})(\btheta - \btheta^*)\big]\bigg) =  B(\btheta^{*}) \Big(G(\btheta) -\btheta \Big) + \nabla_{\scriptsize\btheta}^2F(\btheta^*) (\btheta -\btheta^*). 
\end{equation*}
For the L-GS, \cref{Taylor1} gives
\[ 
    G(\btheta) -\btheta = -B^{-1}(\btheta) \nabla_{\scriptsize\btheta}F(\btheta) = - B^{-1}(\btheta)\nabla_{\scriptsize\btheta}^2F(\btheta^*) (\btheta -\btheta^*) -B^{-1}(\btheta)R(\btheta;\btheta^*),
\] 
which implies
\begin{equation}\label{L-GS}
 {\bf a}(\btheta,\btheta^{*}) = \Bigl( B(\btheta) - B(\btheta^{*})\Bigr)B^{-1}(\btheta) \nabla_{\scriptsize\btheta}^2F(\btheta^*) (\btheta -\btheta^*) -B(\btheta^{*}) B^{-1}(\btheta)R(\btheta;\btheta^*).    
\end{equation}

For the NL-GS, (\ref{Taylor2}) leads to
\[ 
    G(\btheta) -\btheta 
    = - B^{-1}_1(\btheta) B_2(\btheta)\nabla_{\scriptsize\btheta}^2F(\btheta^*) (\btheta -\btheta^*) -B^{-1}_1(\btheta) \hat{R}(\btheta;\btheta^*), 
\] 
which, together with $B_3(\btheta)\equiv B_2^{-1}(\btheta)B_1(\btheta) = \begin{pmatrix}{\cal H}_{11}(\btheta) & {\bf 0} \\[2mm]   {\cal H}_{21}(\btheta) & {\cal H}_{}(G_1(\btheta),\bb) \end{pmatrix}$, yields
\begin{equation}\label{NL-GS}
    {\bf a}(\btheta,\btheta^*)= \Bigl( B_3(\btheta) - B(\btheta^{*})\Bigr)B^{-1}_1(\btheta) B_2(\btheta) \nabla_{\scriptsize\btheta}^2F(\btheta^*) (\btheta -\btheta^*) -B(\btheta^{*}) B^{-1}_1(\btheta)\hat{R}(\btheta;\btheta^*).  
\end{equation}

By the assumptions that $F(\btheta)$ is twice continuously differentiable and that $\nabla_{\scriptsize\btheta}^2F(\btheta^*)$ is SPD, there exists a $\delta > 0$ neighbourhood centered at $\btheta^{*}$, ${\cal B}(\btheta^{*}; \delta) = \{\btheta \in \mathbb{R}^{2n+1} : \|\btheta - \btheta^{*}\| < \delta\} \subset {\cal O}$, such that $\|B^{-1}(\btheta) \|$ and $\|B^{-1}_1(\btheta) \|$ are bounded for all $\btheta \in {\cal B}(\btheta^{*}; \delta)$ and that 
\[
\lim_{\bm{\scriptstyle\theta} \to \bm{\scriptstyle\theta}^{*}} \Bigl( B(\btheta) - B(\btheta^{*})\Bigr) = {\bf 0} \quad\mbox{and}\quad \lim_{\bm{\scriptstyle\theta} \to \bm{\scriptstyle\theta}^{*}} \Bigl( B_3(\btheta) - B(\btheta^{*})\Bigr) = {\bf 0}.
\]
Now, \cref{lim1} is a direct consequence of \cref{L-GS}, \cref{NL-GS}, the triangle inequality, and the facts that 
\[
\lim\limits_{{\scriptsize\btheta} \to {\scriptsize\btheta^{*}}} \|{R}(\btheta;\btheta^*)\| \big/ \|\btheta - \btheta^*\| = 0 \quad\mbox{and}\quad  \lim\limits_{{\scriptsize\btheta} \to {\scriptsize\btheta^{*}}} \|\hat{R}(\btheta;\btheta^*)\| \big/ \|\btheta - \btheta^*\| = 0.
\]
This completes the proof of the lemma.
\end{proof}

The following lemma was stated and proved in \cite{Rob}. For convenience of readers, a brief proof is provided. 

\begin{lemma}\label{lemmPDS}
Assume that $A \in \mathbb{R}^{m\times m}$ is SPD and that $M  \in \mathbb{R}^{m\times m}$ is invertible. Then the matrix $M + M^T - A$ is SPD if and only if
    \begin{equation*}
        \|I_{2n+1} - M^{-1}A \|_{A} < 1,
    \end{equation*}
where $\|\bv\|_A=\sqrt{\langle A\bv, \bv \rangle}$ and $\langle\cdot, \cdot\rangle$ is the standard inner product in $\mathbb{R}^{m}$.
\end{lemma}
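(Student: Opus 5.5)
The plan is to compute the $A$-norm of $E = I - M^{-1}A$ directly. Since $\|E\|_A^2 = \sup_{\bv\neq 0}\|E\bv\|_A^2/\|\bv\|_A^2$ and the unit sphere is compact in finite dimensions, $\|E\|_A<1$ holds if and only if $\|E\bv\|_A^2 < \|\bv\|_A^2$ for every $\bv\neq {\bf 0}$. So it suffices to find an identity for $\|\bv\|_A^2-\|E\bv\|_A^2$ and read off the sign. (The identity in the statement should be read as $I_m$.)

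For $\bv\neq{\bf 0}$, set $\bg = M^{-1}A\bv$. Then $\bg\neq {\bf 0}$, because $A$ and $M$ are invertible, and $A\bv = M\bg$. Expanding with the symmetry of $A$ gives
\[
\|E\bv\|_A^2 = \langle A(\bv-\bg),\bv-\bg\rangle = \|\bv\|_A^2 - 2\langle A\bv,\bg\rangle + \langle A\bg,\bg\rangle .
\]
Now write $2\langle A\bv,\bg\rangle = 2\langle M\bg,\bg\rangle = \langle (M+M^T)\bg,\bg\rangle$. This yields the key identity
\[
\|\bv\|_A^2 - \|E\bv\|_A^2 = \langle (M+M^T-A)\bg,\bg\rangle .
\]

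For the forward implication, assume $M+M^T-A$ is SPD. Then the right-hand side of the identity is positive for every $\bv\neq{\bf 0}$. By compactness, the supremum of $\|E\bv\|_A^2/\|\bv\|_A^2$ is attained and is therefore strictly less than one.

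For the converse, assume $\|E\|_A<1$. Since $\bv\mapsto \bg=M^{-1}A\bv$ is a bijection of $\mathbb{R}^m$, every nonzero $\bg$ arises from a nonzero $\bv$. The identity then gives $\langle (M+M^T-A)\bg,\bg\rangle = \|\bv\|_A^2-\|E\bv\|_A^2 \ge (1-\|E\|_A^2)\|\bv\|_A^2>0$. The matrix $M+M^T-A$ is symmetric by construction, so it is SPD.

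The only potentially delicate point is the strict inequality in the forward direction: pointwise strict decrease must be upgraded to a strict bound on the norm. This is handled by compactness of the $A$-unit sphere and continuity of $\bv\mapsto\|E\bv\|_A$. Everything else is a direct computation.
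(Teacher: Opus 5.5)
Your proof is correct and follows the paper's argument. The paper's proof is exactly your identity $\|(I-M^{-1}A)\bv\|_A^2=\|\bv\|_A^2-\langle (M+M^T-A)M^{-1}A\bv,\,M^{-1}A\bv\rangle$, from which it declares the lemma immediate. You additionally supply the details the paper leaves implicit: compactness of the $A$-unit sphere to upgrade the pointwise strict inequality to $\|I-M^{-1}A\|_A<1$, and bijectivity of $\bv\mapsto M^{-1}A\bv$ for the converse. You also correctly read $I_{2n+1}$ as $I_m$.
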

\begin{proof}
For any $\bv \in \mathbb{R}^{m}$, it is easy to check that 
\[
 \|(I - M^{-1}A)\bv\|_A^2 = \langle A\bv, \bv \rangle - \left\langle(M + M^T -A)(M^{-1}A)\bv, (M^{-1}A)\bv \right\rangle ,
\]
which implies the validity of the lemma.
\end{proof}

Now, we are ready to present a sufficient condition for local convergence of the fixed-point iteration in \cref{fixedPoint} for the NL-GS and L-GS.

\begin{theorem}\label{main_lemma}
If $\nabla^2_{\scriptsize{\btheta}}F(\btheta^{*})$ is SPD, then  
the fixed point iteration \cref{fixedPoint} for the L-GS or NL-GS converges locally to $\btheta^{*}$ in the norm induced by $\nabla^2_{\scriptsize{\btheta}}F(\btheta^{*})$.
\end{theorem}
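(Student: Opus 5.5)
The plan is to combine \cref{BN_lemma1}, \cref{lemmPDS} and \cref{thmOS}, with $A=\nabla^2_{\scriptsize\btheta}F(\btheta^*)$ and $M=B(\btheta^*)$. We work on the open set $\mathcal{O}$ of the Invertibility Assumption and assume $\btheta^*\in\mathcal{O}$. Then $\mathcal{H}_{11}(\btheta^*)$ and $\mathcal{H}_{22}(\btheta^*)$ are invertible, and in fact they are automatically SPD, being principal diagonal blocks of the SPD matrix $A$. Hence the block lower-triangular matrix $B(\btheta^*)$ is invertible.

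Next we check that $\btheta^*$ is a fixed point of $G$. Since $\nabla_{\scriptsize\btheta}F(\btheta^*)=\mathbf{0}$, we have $G_1(\btheta^*)=\bc^*$. For L-GS and JB this immediately gives $G_2(\btheta^*)=\bb^*$. For NL-GS, $G_1(\btheta^*)=\bc^*$ means $(G_1(\btheta^*),\bb^*)=\btheta^*$, so again $G_2(\btheta^*)=\bb^*$.

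Differentiability of $G$ at $\btheta^*$ follows from the $C^2$ regularity of $F$, and \cref{BN_lemma1} gives
\[
\bJ_G(\btheta^*)=I_{2n+1}-B^{-1}(\btheta^*)A .
\]
This formula covers both schemes: at $\btheta^*$ we have $B_1(\btheta^*)$ with $\mathcal{H}_{22}(G_1(\btheta^*),\bb^*)=\mathcal{H}_{22}(\btheta^*)$, and the lemma already handles NL-GS.

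To apply \cref{lemmPDS}, we need $M+M^T-A$ to be SPD. Since $A$ is symmetric, $\mathcal{H}_{12}=\mathcal{H}_{21}^T$ at $\btheta^*$, and a direct computation gives
\[
B(\btheta^*)+B(\btheta^*)^T-A=\begin{pmatrix}2\mathcal{H}_{11}&\mathcal{H}_{21}^T\\ \mathcal{H}_{21}&2\mathcal{H}_{22}\end{pmatrix}-\begin{pmatrix}\mathcal{H}_{11}&\mathcal{H}_{12}\\ \mathcal{H}_{21}&\mathcal{H}_{22}\end{pmatrix}=\begin{pmatrix}\mathcal{H}_{11}(\btheta^*)&\mathbf{0}\\ \mathbf{0}&\mathcal{H}_{22}(\btheta^*)\end{pmatrix}.
\]
This block diagonal matrix is SPD because its diagonal blocks are principal submatrices of the SPD matrix $A$. \Cref{lemmPDS} then yields $\|\bJ_G(\btheta^*)\|_A=\sigma<1$.

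Finally, since $\|\cdot\|_A$ is a norm on $\mathbb{R}^{2n+1}$, \cref{thmOS} applied with the norm $\|\cdot\|_A$ gives local convergence in the norm induced by $\nabla^2_{\scriptsize\btheta}F(\btheta^*)$.

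The main obstacle is the setup rather than the estimate: confirming that $\btheta^*$ is a fixed point lying in $\mathcal{O}$, and that $G$ is differentiable there. This relies on \cref{BN_lemma1} together with continuity of $\mathcal{H}_{22}(G_1(\btheta),\bb)$ near $\btheta^*$, which holds because $G_1$ is continuous. Once these are in place, the identity $B+B^T-A=\mathrm{diag}(\mathcal{H}_{11},\mathcal{H}_{22})$ makes the contraction estimate immediate.
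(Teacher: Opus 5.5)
Your proposal is correct and follows the paper's proof. The paper also takes $A=\nabla^2 F(\btheta^*)$ and $M=B(\btheta^*)$, notes that $B(\btheta^*)^T+B(\btheta^*)-A=\mathrm{diag}(\mathcal{H}_{11}(\btheta^*),\mathcal{H}_{22}(\btheta^*))$ is SPD, and concludes by combining \cref{lemmPDS}, \cref{BN_lemma1}, and \cref{thmOS}. Your extra checks that $\btheta^*$ is a fixed point and that $B(\btheta^*)$ is invertible are details the paper leaves implicit. Differentiability of $G$ at $\btheta^*$ comes from \cref{BN_lemma1} together with $\nabla F(\btheta^*)=\mathbf{0}$, not from $C^2$ regularity alone, which only makes the inverse Hessian blocks continuous.
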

\begin{proof}
Symmetry and positive definiteness of the Hessian $\nabla^2_{\scriptsize{\btheta}}F(\btheta^{*})$ implies that
\[
B(\btheta^{*})^T + B(\btheta^{*}) - \nabla^2_{\scriptsize{\btheta}}F(\btheta^{*}) = \begin{pmatrix} {\cal H}_{11}(\btheta^{*}) & {\bf 0}\\{\bf 0} &{\cal H}_{22}(\btheta^{*}) \end{pmatrix}
\]
is SPD. Hence, by \cref{lemmPDS}, we have 
\[
\|I_{2n+1} - B(\btheta^*)^{-1}\nabla^2_{\scriptsize{\btheta}}F(\btheta^{*}))\|_{\nabla^2_{\text{\tiny{\btheta}}}F(\text{\scriptsize{\btheta}}^*)} = \sigma < 1,
\]
which, together with \cref{thmOS} and \cref{BN_lemma1}, implies the validity of the theorem.
\end{proof}

\begin{remark}
For the Jacobi method described in \cref{Jacobi}, if $\nabla^2 F(\btheta^*)$ is SPD, then a similar argument as that of  \cref{BN_lemma1} gives the following Jacobian matrix $\bJ_{G}(\btheta^{*})$ of $G(\btheta)$ at the fixed point $\btheta^*$
\[
        \bJ_{G}(\btheta^{*}) = I_{2n+1} - \tilde{B}(\btheta^{*})^{-1}\nabla^2_{\scriptsize{\btheta}}F(\btheta^{*}), \quad\mbox{where }\,  \tilde{B}(\btheta) = \begin{pmatrix} {\cal H}_{11}(\btheta) & {\bf 0}\\[1mm]{\bf 0} &{\cal H}_{22}(\btheta) \end{pmatrix}.
    \]
Moreover, the iteration is locally convergent to $\btheta^*$ if and only if
    \[
        \tilde{B}(\btheta^{*})^T + \tilde{B}(\btheta^{*}) - \nabla^2_{\scriptsize{\btheta}}F(\btheta^{*}) 
        = \begin{pmatrix} {\cal H}_{11}(\btheta^{*}) & -{\cal H}_{12}(\btheta^*)\\[1mm]-{\cal H}_{21}(\btheta^*) &{\cal H}_{22}(\btheta^{*}) \end{pmatrix}
    \]
    is SPD.
\end{remark}

\section{Applications}\label{Apps}

This section applies the local convergence theory developed in the previous section to one-dimensional least-squares (LS) function approximation and diffusion-reaction (DR) problems studied in \cite{cai2024fast2}. 

For the diffusion-reaction problem, let $u$ be the exact solution of the differential equation
\begin{equation}\label{pde}
    \left\{\begin{array}{lr}
        -(a(x)u^{\prime}(x))^{\prime} +r(x)u(x)=f(x), & \mbox{in }\, I=(0,1),\\[2mm]
        u(0)= \alpha,  \quad u(1)=\beta,&
    \end{array}\right.
\end{equation}
where the diffusion coefficient $a(x)$, the reaction coefficient $r(x)$, and the right-hand side $f(x)$ are given real-valued functions defined on $I$. Assume that $a(x)$ and $r(x)$ are bounded below by the respective positive constant $\mu > 0$ and non-negative constant $r_0\geq 0$ almost everywhere on $I$. 
Assume that $f(x) \in C(I)$ and that $a(x) \in C^1(I)$.
For the least-squares approximation problem, we assume that the weight function $r(x)\ge r_0>0$. For these problems, the NN approximation is to seek $u_n^*(x) = u_n(x; \btheta^*)\in {\cal M}_n(I)$ such that
\begin{equation}\label{min}
    u_n^*(x) = u_n(x;\btheta^*)=\argmin_{u_n\in\cM_n(I)}J\left(u_n(\cdot;\btheta)\right),
\end{equation} 
where the functional $J(v)$ is given by
\begin{equation}\label{functional}
J(v)=\left\{\begin{array}{ll}
   \dfrac{1}{2}\displaystyle\int_{0}^1\left[a(x)(v^{\prime}(x))^2 + r(x)(v(x))^2\right]dx - \displaystyle\int_{0}^1f(x)v(x)dx + \dfrac{\gamma}{2}(v(1) - \beta)^2,  & \mbox{DR},  \\[4mm]
   \dfrac{1}{2}
\displaystyle\int_0^1 r(x)\left[(v(x) - u(x)\right]^2dx,  &  \mbox{LS}.
\end{array}
\right.
\end{equation}

\subsection{Hessian}
First, we calculate the Hessian matrices. To this end, denote by 
\begin{equation*}
H(t)=
    \begin{cases}
        1, & \quad t>0,\\
        \frac{1}{2}, & \quad t = 0,\\
        0, & \quad t < 0,
    \end{cases}
    \quad \mbox{and} \quad \delta(t)= 
    \begin{cases}
        +\infty, & \quad t=0,\\
        0, & \quad t \not= 0
    \end{cases}
\end{equation*}
the Heaviside (unit) step function and the Dirac delta function, respectively. Clearly, $H(t)=\sigma'(t)$ and $\delta(t)=\sigma''(t)$ everywhere except at $t=0$. For each $i = 0, 1, \dots, n$, let 
\begin{equation*}
    \sigma_i(x)=\sigma(x-b_i), \quad  H_i(x)=H(x-b_i), \quad \mbox{and} \quad \delta_{i}(x)=\delta(x-b_i),
\end{equation*}
and let
\begin{equation*}
    {\bf \Sigma}_{n+1}(x)=\begin{pmatrix}
        \sigma_0(x)\\\sigma_1(x)\\
        \vdots\\
        \sigma_n(x)
    \end{pmatrix},\: \bH_n(x)=\begin{pmatrix}H_1(x)\\
    \vdots\\H_n(x)       
    \end{pmatrix},\: \bH_{n+1}(x) = \begin{pmatrix}
        H_{0}(x)\\
        H_1(x)\\
        \vdots\\
        H_{n}(x)
    \end{pmatrix},\,\,\mbox{and } {\bf \Lambda}_n(x)=\begin{pmatrix}
        \delta_{1}(x)\\
        \vdots \\
        \delta_n(x)
    \end{pmatrix}.
\end{equation*}

For $i = 1, \dots, n$, let  
\begin{equation}\label{gi}
    g_i= g_i(\btheta)= \left\{\begin{array}{ll}  r(b_i)u_n(b_i)-f(b_i) - a^\prime(b_i)u_n'(b_i) ,  & \mbox{DR},  \\[3mm]   r(b_i)(u_n(b_i) - u(b_i)), &  \mbox{LS},
\end{array}
\right.
\end{equation}
where $u_n'(b_i):=\sum\limits_{j=0}^{i-1}c_j + \dfrac{c_i}{2}$, and set
\begin{equation*}
    \bg(\btheta) = (g_1, \dots, g_n)^T, \quad \bd  = (1-b_1, \dots, 1-b_n)^T, \quad  \mbox{and} \quad \hat{\bc} = (c_1, \cdots, c_n)^T.
\end{equation*}
For any $u_n(x;\btheta)\in {\cal M}_n(I)$, the value of the functional $J(u_n(\cdot; \btheta))$ at $u_n(x;\btheta)$ is a function of parameters $\btheta$. For simplicity of notation, denote this function by $F$, i.e., $F(\btheta) = J(u_n(\cdot; \btheta))$.
In \cite{cai2024fast2}, we derived the principle blocks of the Hessian matrix $\nabla^2_{\scriptsize{\btheta}}F(\btheta)$ as follows
\begin{align}\label{H_11}
   & {\cal H}_{11}(\btheta) = \left\{\begin{array}{ll}
   \displaystyle\int_0^1a(x){\bf H}_{n+1}{\bf H}_{n+1}^Tdx + \displaystyle\int_0^1r(x){\bf \Sigma}_{n+1}{\bf \Sigma}_{n+1}^Tdx+\gamma\bd\bd^T,  & \mbox{DR},  \\[4mm]
\displaystyle\int_{0}^1r(x){\bf \Sigma}_{n+1}{\bf \Sigma}_{n+1}^Tdx, &  \mbox{LS}
\end{array}
\right. \\[4mm] \label{H_22}
\mbox{and}\quad & {\cal H}_{22}(\btheta) = \left\{\begin{array}{ll}
\bD(\hat{\bc})\bD(\bg)+  {\bf D}(\hat{\bc})\left(\displaystyle\int_0^1r(x){\bf H}_n{\bf H}^T_n dx\right) {\bf D}(\hat{\bc}) + \gamma \hat{\bc}\hat{\bc}^T,  & \mbox{DR},  \\[4mm]
\bD(\hat{\bc}) \bD(\bg)  + \bD(\hat{\bc}) \left(\displaystyle\int_{0}^1r(x)\bH_{n}\bH^T_n dx\right)\bD(\hat{\bc}), &  \mbox{LS},
\end{array}
\right.
\end{align}
where $\bD(\bg) =\text{diag}(g_1, \dots, g_n)$ and $\bD(\hat{\bc}) = \text{diag}(c_1, \dots, c_n)$ are diagonal matrices.

 To compute the off-diagonal block ${\cal H}_{12}(\btheta) = \left( \dfrac{\partial^2 F(\btheta)}{\partial c_i\partial b_j} \right)_{(n+1)\times n}$ of the Hessain matrix $\nabla^2_{\scriptsize{\btheta}}F(\btheta)$,  let 
\[
F_j(\btheta)= \left\{\begin{array}{ll}
\displaystyle\int_{0}^1  \biggl(f(x)H_{j}(x)-a(x)u_n'(x)\delta_j{(x)} -r(x)u_n(x)H_j(x) \biggr)dx  - \gamma( u_n(1) -\beta),  & \mbox{DR},  \\[4mm]
\displaystyle\int_{0}^1r(x)(u(x)-u_n(x))H_j(x)dx, &  \mbox{LS}.
\end{array}
\right.
\]
for $j = 1, \dots, n$. From (3.5) and (6.3) in \cite{cai2024fast}, we have
\begin{equation}\label{grad-bj}
    \frac{\partial}{\partial b_j} F(\btheta) = c_j F_j(\btheta).
\end{equation}
then together with the fact that $\dfrac{\partial c_j}{\partial c_i} = \delta_{ij}$ (the Kronecker delta), we obtain
\begin{equation}\label{H12-ij}
    \frac{\partial^2 F}{\partial c_i \partial b_j} (\btheta) 
    = \delta_{ij} F_j(\btheta) + c_j \frac{\partial F_j}{\partial c_i} (\btheta).
\end{equation}

\begin{lemma}\label{matrixH12}
         For the functional in \cref{functional}, the off-diagonal block ${\cal H}_{12}(\btheta)$ has the form 
\begin{equation*}\label{matrixQ_DR}
        {\cal H}_{12}(\btheta) = \left\{\begin{array}{ll}
{\cal R}(\btheta) -\left(\displaystyle\int_0^1a(x)\, \bH_{n+1} {\bf \Lambda}_n^Tdx + \displaystyle\int_{0}^1r(x)\,{\bf \Sigma}_{n+1}\bH_n^Tdx\right)\bD(\hat{\bc}) - \gamma\bd\hat{\bc}^T,  & \mbox{\textup{DR}},  \\[4mm]
{\cal R}(\btheta) - \left(\displaystyle\int_{0}^1r(x){\bf \Sigma}_{n+1} \bH_n^Tdx\right)\bD(\hat{\bc}), &  \mbox{\textup{LS}},
\end{array}
\right.
   \end{equation*}
where $F_0(\btheta) = 0$ and  ${\cal R}(\btheta)=\left( {\cal R}_{ij}(\btheta)\right)=\left( \delta_{ij} F_i(\btheta)\right)_{(n+1)\times n}$.
\end{lemma}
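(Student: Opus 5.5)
The plan is to start from \cref{H12-ij}: the term $\delta_{ij}F_j(\btheta)$ is exactly the entry ${\cal R}_{ij}(\btheta)$, with the convention $F_0=0$ covering the row $i=0$. Here $c_0$ has no associated $b_0$-derivative, so the Kronecker term is absent there. What remains is to compute $\partial F_j/\partial c_i$ for $i=0,\dots,n$ and $j=1,\dots,n$, and to check that $c_j\,\partial F_j/\partial c_i$ is the $(i,j)$ entry of the integral terms multiplied on the right by $\bD(\hat{\bc})$. That right multiplication accounts for the factor $c_j$, which multiplies column $j$.

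The derivatives of $u_n$ with respect to the linear parameters are immediate. We have $\partial u_n(x)/\partial c_i=\sigma_i(x)$ and $\partial u_n'(x)/\partial c_i=H_i(x)$. Also $\partial u_n(1)/\partial c_i=\sigma_i(1)=1-b_i$, which is the $i$-th entry of $\bd$ with $b_0=0$, so the $\bd$ in \cref{H_11} is really indexed over $i=0,\dots,n$.

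For LS I differentiate under the integral in $F_j=\int_0^1 r(u-u_n)H_j\,dx$. Since $H_j$ does not depend on $\bc$, this gives $\partial F_j/\partial c_i=-\int_0^1 r\,\sigma_i H_j\,dx$. This is minus the $(i,j)$ entry of $\int_0^1 r\,{\bf\Sigma}_{n+1}\bH_n^T\,dx$, and multiplying by $c_j$ yields the stated form.

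For DR I differentiate the three pieces of $F_j$ separately.
\begin{itemize}
\item The term $f H_j$ does not depend on $\bc$ and contributes nothing.
\item The term $-r\,u_n H_j$ contributes $-\int_0^1 r\,\sigma_i H_j\,dx$.
\item The term $-a\,u_n'\delta_j$ contributes $-\int_0^1 a\,H_i\delta_j\,dx$, the $(i,j)$ entry of $\int_0^1 a\,\bH_{n+1}{\bf\Lambda}_n^T\,dx$.
\item The boundary term $-\gamma(u_n(1)-\beta)$ contributes $-\gamma(1-b_i)=-\gamma d_i$.
\end{itemize}
After multiplying by $c_j$, the boundary term gives $-\gamma d_i c_j$, which is the $(i,j)$ entry of $-\gamma\bd\hat{\bc}^T$. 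Collecting all terms gives the DR formula.

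The main obstacle is justifying the differentiation of the delta term. The expression $\int_0^1 a\,u_n'\delta_j\,dx$ must be read as $a(b_j)u_n'(b_j)$, with $u_n'(b_j)=\sum_{k<j}c_k+c_j/2$ as in \cref{gi}, using the convention $H(0)=1/2$. Differentiating this symmetric-average value in $c_i$ gives $a(b_j)H_i(b_j)$, i.e.\ $1$ for $i<j$, $1/2$ for $i=j$, and $0$ for $i>j$. This agrees with $\int_0^1 a\,H_i\delta_j\,dx$ interpreted by the same convention. I would state this pointwise interpretation explicitly, so that the matrix $\int_0^1 a\,\bH_{n+1}{\bf\Lambda}_n^T\,dx$ is well defined.

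Since \cref{grad-bj} is taken from \cite{cai2024fast}, I would not rederive it. I would only note that its $F_j$ is linear, and hence smooth, in $\bc$ for fixed $\bb$, so differentiation under the integral is legitimate.
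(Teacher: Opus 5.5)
Your proposal is correct and is essentially the paper's proof. Both arguments start from \cref{H12-ij}, read off $\delta_{ij}F_j$ as ${\cal R}(\btheta)$, compute $\partial F_j/\partial c_i$ term by term (giving $-\int_0^1 aH_i\delta_j\,dx-\int_0^1 r\sigma_iH_j\,dx-\gamma(1-b_i)$ for DR and only the $r$-term for LS), and absorb the factor $c_j$ into right multiplication by $\bD(\hat{\bc})$. Your extra remarks are correct clarifications that the paper leaves implicit: the delta term is read pointwise as $a(b_j)H_i(b_j)$ with $H(0)=\tfrac12$, matching $u_n'(b_j)=\sum_{k<j}c_k+c_j/2$, and $\bd$ must here be the $(n+1)$-vector $(1-b_0,\dots,1-b_n)^T$ rather than the $n$-vector the paper defines.
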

\begin{proof}
The lemma is a direct consequence of \cref{H12-ij} and the fact that 
\[
\dfrac{\partial F_j}{\partial c_i} (\btheta) = \left\{\begin{array}{ll}
  -\displaystyle\int_{0}^1a(x)H_i(x)\delta_j(x)dx - \displaystyle\int_0^1r(x)\sigma_i(x)H_j(x) dx - \gamma (1 - b_i),  & \mbox{DR},  \\[4mm]
-\displaystyle\int_0^1r(x)\sigma_i(x)H_j(x) dx, &  \mbox{LS},
\end{array}
\right.
\]
for all $i = 0, 1, \ldots, n$.

\end{proof}





For a given function $w: I \rightarrow  \mathbb{R}$, define the matrices
\[
\mathbb{H}_{\Sigma}(w;\btheta) = 
\begin{pmatrix}
        \displaystyle\int_{0}^1 w(x) {\bf \Sigma}_{n+1}{\bf \Sigma}^T_{n+1} dx & - \displaystyle\int_{0}^1 w(x) {\bf \Sigma}_{n+1}\bH_{n}^Tdx\\[6mm]
         - \displaystyle\int_{0}^1 w(x) \bH_{n}{\bf \Sigma}_{n+1}^Tdx &    \displaystyle\int_0^1 w(x) \bH_{n}\bH_n^T dx 
    \end{pmatrix}
\]
and
\[
\mathbb{H}_{\Lambda}(w;\btheta)=\begin{pmatrix}  \displaystyle\int_0^1 w(x){\bf H}_{n+1}{\bf H}_{n+1}^Tdx & - \displaystyle\int_{0}^1 w(x)\bH_{n+1}{\bf\Lambda}_n^{T} dx\\[6mm]
                 - \displaystyle\int_{0}^1 w(x){\bf \Lambda}_n\bH_{n+1}^T dx & {\bf 0} \end{pmatrix}.
\]
Let $\mathbb{H}_{\Sigma+\Lambda}(\btheta) = \mathbb{H}_{\Sigma}(r;\btheta) + \mathbb{H}_{ \Lambda}(a;\btheta)$. Combining \cref{H_11}, \cref{H_22}, and \cref{matrixH12}, the Hessian matrix at $\btheta$ has the form
\begin{equation}\label{Hessian-theta}
    \nabla^2_{\scriptsize{\btheta}}F(\btheta) = \mathbb{R}(\btheta) +
    \left\{\begin{array}{ll} 
    {\cal D}(\hat{\bc}) \mathbb{H}_{\Sigma+ \Lambda}(\btheta) {\cal D}(\hat{\bc}) + {\cal D}_0(\btheta) + 
         \gamma \begin{pmatrix} \bd \\ -\hat{\bc} \end{pmatrix} \left( \bd^T, -\hat{\bc}^T\right),   & \mbox{DR},  \\[4mm]
     {\cal D}(\hat{\bc})\mathbb{H}_{\Sigma}(r;\btheta) {\cal D}(\hat{\bc}) + {\cal D}_0(\btheta), &  \mbox{LS},
\end{array}\right.
\end{equation}
where ${\cal D}(\hat{\bc}) = \begin{pmatrix} I_{n+1} & {\bf 0}\\ {\bf 0} & \bD(\hat{\bc})\end{pmatrix}$, ${\cal D}_0(\btheta) = \begin{pmatrix} {\bf 0} & {\bf 0}\\ {\bf 0} & \bD(\hat{\bc})\bD(\bg)\end{pmatrix}$, and $\mathbb{R}(\btheta) = \begin{pmatrix}
    {\bf 0} & {\cal R}(\btheta) \\
    {\cal R}^T(\btheta) & {\bf 0}
\end{pmatrix}$. \\


\subsection{Local Convergence} 
In this section, we present sufficient conditions for the local convergence of the BN method for both the diffusion-reaction and the least-squares approximation problems. 

First, we establish lower bounds of the smallest eigenvalues of the matrices $\mathbb{H}_{\Sigma}(w;\btheta)$ and $\mathbb{H}_{\Lambda}(w;\btheta)$.
To this end, let 
\begin{equation*}
    h_i = b_{i+1} - b_{i}, \quad i = 0, \dots, n; \quad h_{\text{min}}=\displaystyle\min_{0\leq i\leq n}\{h_i\}, \quad \tilde{h}_i= \min\{h_{i-1}, h_{i}\},  \quad i= 1\, \dots, n.\end{equation*}
    
\begin{lemma}\label{QuadLS}
Suppose that $0\leq w_0\leq w(x) \in C(I)$. 
Then, for any $\balpha = (\alpha_0, \alpha_1, \dots, \alpha_n)^T \in \mathbb{R}^{n+1}$, $\bbeta = (\beta_1, \dots, \beta_n)^T \in \mathbb{R}^n$, and $\tau \in (0,1]$, we have
\begin{equation}\label{l-b}
    \left\{\begin{array}{l} 
     \quad\,\, \quad (\balpha^T, \bbeta^T) \mathbb{H}_{\Sigma}(w;\btheta)\begin{pmatrix} \balpha \\
                \bbeta \end{pmatrix} \geq \dfrac{w_0h_{\textup{min}}^3}{96} |\balpha|^2 + \dfrac{w_0}{24}\sum\limits_{i=1}^n\tilde{h}_i\beta_i^2  \\[6mm]
      \mbox{and}\quad (\balpha^T, \bbeta^T) \mathbb{H}_{\Lambda}(w;\btheta)\begin{pmatrix} \balpha \\
                \bbeta \end{pmatrix}    \geq \dfrac{w_0 (1-\tau)h_{\textup{min}}}{4} |\balpha|^2 - \dfrac{1}{2\tau w_0}\sum\limits_{i=1}^n  w(b_i)^2\left(h_{i-1}^{-1} + h_{i}^{-1}\right)\beta_i^2,
                \end{array}\right.
\end{equation}
where $|\balpha|$ is the magnitude of $\balpha$.
\end{lemma}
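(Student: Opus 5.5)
The plan is to identify both quadratic forms with integrals of concrete piecewise linear / piecewise constant functions, bound them interval by interval, and then translate back to $\balpha$ and $\bbeta$.

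\textbf{Setup.} Fix $\balpha,\bbeta$ and put
\[
s(x)=\sum_{j=0}^n\alpha_j\sigma_j(x), \qquad v(x)=s(x)-\sum_{i=1}^n\beta_iH_i(x).
\]
Directly from the definitions of $\mathbb{H}_{\Sigma}$ and $\mathbb{H}_{\Lambda}$,
\[
(\balpha^T,\bbeta^T)\mathbb{H}_{\Sigma}(w;\btheta)\begin{pmatrix}\balpha\\ \bbeta\end{pmatrix}=\int_0^1 w\,v^2\,dx .
\]
For the second form, integrating the $\delta_i$ against $H_j$ with the convention $H(0)=\tfrac12$ gives
\[
(\balpha^T,\bbeta^T)\mathbb{H}_{\Lambda}(w;\btheta)\begin{pmatrix}\balpha\\ \bbeta\end{pmatrix}=\int_0^1 w\,(s')^2\,dx-2\sum_{i=1}^n\beta_i\,w(b_i)\,\frac{s_{i-1}+s_i}{2}.
\]
Here $s_k=\sum_{j\le k}\alpha_j$ is the constant slope of $s$ (and of $v$) on $(b_k,b_{k+1})$. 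Since $\alpha_k=s_k-s_{k-1}$ (with $s_{-1}=0$), we have
\[
|\balpha|^2\le 2\sum_k(s_k^2+s_{k-1}^2)\le 4\sum_{k=0}^n s_k^2 .
\]
This is the bridge from slopes back to $\balpha$.

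\textbf{First inequality.} Write $\int w v^2\ge w_0\int v^2$ and split $\int v^2$ into two halves. On each subinterval of length $h_k$, $v$ is linear with slope $s_k$ and endpoint values $p,q$.
\begin{itemize}
\item For the first half, use the variance-type bound $\int v^2\ge h_k^3s_k^2/12$. Summing over $k$ and using $h_k\ge h_{\min}$ together with $|\balpha|^2\le 4\sum s_k^2$ gives $\tfrac12\cdot\tfrac{h_{\min}^3}{48}|\balpha|^2=\tfrac{h_{\min}^3}{96}|\balpha|^2$.
\item For the second half, use $\int v^2=h_k(p^2+pq+q^2)/3\ge h_k(p^2+q^2)/6$. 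The jump $v(b_i^-)-v(b_i^+)=\beta_i$ gives $\beta_i^2\le 2\bigl(v(b_i^-)^2+v(b_i^+)^2\bigr)$. Each one-sided value at $b_i$ is an endpoint value on an interval of length at least $\tilde h_i$, and each appears in exactly one interval. Hence the second half is at least $\tfrac12\sum_i\tilde h_i\beta_i^2/12=\sum_i\tilde h_i\beta_i^2/24$.
\end{itemize}
Multiplying by $w_0$ yields the first line of \cref{l-b}.

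\textbf{Second inequality.} First, $\int w(s')^2\ge w_0\sum_k h_ks_k^2$. For the cross term, apply Young's inequality separately to each product $|\beta_i|w(b_i)|s_{i-1}|$ and $|\beta_i|w(b_i)|s_i|$, using the weights $\tau w_0h_{i-1}$ and $\tau w_0h_i$ respectively. This gives
\[
\Bigl|\beta_iw(b_i)(s_{i-1}+s_i)\Bigr|\le\frac{\tau w_0}{2}\bigl(h_{i-1}s_{i-1}^2+h_is_i^2\bigr)+\frac{w(b_i)^2}{2\tau w_0}\bigl(h_{i-1}^{-1}+h_i^{-1}\bigr)\beta_i^2 .
\]
When summing over $i$, each $h_ks_k^2$ appears at most twice with coefficient $\tau w_0/2$. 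So the cross term absorbs at most $\tau w_0\sum_kh_ks_k^2$. The remaining positive part is
\[
(1-\tau)w_0\sum_k h_ks_k^2\ge (1-\tau)w_0h_{\min}\,|\balpha|^2/4,
\]
which is the second line of \cref{l-b}.

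\textbf{Main obstacle.} The delicate step is the second half of the first bound, where the $\beta_i$ must be controlled through the jumps of $v$. It requires checking that each one-sided value $v(b_i^\pm)$ is charged to exactly one interval, and that this interval has length at least $\tilde h_i$. This accounting is what fixes the constant $1/24$. The rest is bookkeeping of constants.
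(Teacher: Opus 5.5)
Your proof is correct and yields exactly the constants in the statement. For the first inequality it is essentially the paper's argument. The paper uses Simpson's rule to get $\int_{b_k}^{b_{k+1}} v^2\,dx \ge \frac{h_k}{6}(p_k^2+q_k^2)$. It then doubles this endpoint sum, spending one copy on $(p_k-q_k)^2=h_k^2s_k^2$ and the other on the jumps $v(b_i^-)-v(b_i^+)=\beta_i$. Your split into a variance half and an endpoint half reaches the same intermediate bound $\frac{1}{24}\sum_k h_k^3s_k^2+\frac{1}{24}\sum_i\tilde h_i\beta_i^2$.

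For the second inequality your route is genuinely different. The paper never evaluates the pairing $\int_0^1 w H_j\delta_i\,dx$ directly. Instead it:
\begin{itemize}
\item replaces each $\delta_i$ by two one-sided box functions of widths $\tau h_i/2$ and $\tau h_{i-1}/2$, plus an auxiliary width $\rho$;
\item completes the square in $L^2$ against $\sqrt{w_0}\,\hat\alpha_n$;
\item bounds the resulting piecewise-constant integrals interval by interval;
\item lets $\rho\to 0$.
\end{itemize}
You instead use the convention $H(0)=\frac12$, the same one behind $u_n'(b_i)$ in the paper. This reduces the form to the finite-dimensional expression $\int_0^1 w(s')^2\,dx-\sum_i\beta_i w(b_i)(s_{i-1}+s_i)$, to which you apply a weighted Young inequality with weights $\tau w_0h_{i-1}$ and $\tau w_0h_i$. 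The paper's box widths play exactly the role of your Young weights. That is why both arguments produce the same factor $\frac{1}{2\tau w_0}(h_{i-1}^{-1}+h_i^{-1})$ and the same $(1-\tau)$ loss on the $|\balpha|^2$ term.

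Your version is shorter, needs no auxiliary parameter or limiting argument, and makes the origin of each constant transparent. By contrast, the splitting identity the paper states for $\int_0^1 wH_i\delta_j\,dx$ is, for $i=j$ and $h_{j-1}\neq h_j$, exact only in the limit $\rho\to0$. The paper's version stays at the level of $L^2$ integrals of functions, at the price of heavier bookkeeping. In both proofs the second bound tacitly requires $w_0>0$, so that the Young weights (respectively, the factor $1/(2\tau w_0)$) make sense.
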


\begin{proof}
For any $\balpha = (\alpha_0, \alpha_1, \dots, \alpha_n)^T \in \mathbb{R}^{n+1}$ and any $\bbeta = (\beta_1, \dots, \beta_n)^T \in \mathbb{R}^n$, let $\beta_0 = 0$ and define the following piecewise linear and constant functions
\[
\alpha_i(x) = \sum_{j=0}^i \alpha_j \sigma_j(x)= \sum_{j=0}^i \alpha_j \sigma(x-b_j) \quad\mbox{and}\quad \beta_i(x) = \sum_{j=0}^i \beta_j H_j(x) \quad\mbox{for }\, i = 0, 1, \ldots, n,
\]
respectively. For $x\in (b_i,b_{i+1})$, $\alpha_n(x) = \alpha_i(x)$ and $\beta_n(x) = \beta_i(x)= \sum\limits_{j=0}^i \beta_j :=a_i$. Clearly, we have
\begin{equation}\label{quad-HS}
    \left(\balpha^T, \bbeta^T\right) \mathbb{H}_{\Sigma}(w;\btheta) 
\begin{pmatrix} \balpha \\ \bbeta \end{pmatrix} 
= \int_0^1 \!\! w \left(\alpha_n(x) - \beta_n(x)\right)^2 dx \ge w_0 \sum_{i=0}^n \int_{b_i}^{b_{i+1}} \!\! \left(\alpha_i(x) - a_i\right)^2 dx.
\end{equation}
By the Simpson rule, $\alpha_i(b_i)=\alpha_{i-1}(b_i)$, and $\sum\limits_{i=0}^n (\alpha_i(b_{i+1}) - a_i)^2\ge \sum\limits_{i=1}^n (\alpha_{i-1}(b_{i}) - a_{i-1})^2$, we obtain
\begin{align*}
& 12 \sum_{i=0}^n\int_{b_i}^{b_{i+1}} (\alpha_i(x) - a_i)^2 dx \ge 2 \sum_{i=0}^n h_i \left[(\alpha_i(b_i) - a_i)^2 + (\alpha_i(b_{i+1}) - a_i)^2  \right] \\[2mm]
\ge \, & \sum_{i=0}^n h_i \left[(\alpha_i(b_i) - a_i)^2 + (\alpha_i(b_{i+1}) - a_i)^2  \right] + \sum_{i=1}^n h_i (\alpha_{i-1}(b_i) - a_i)^2 + \sum_{i=1}^n h_{i-1} (\alpha_{i-1}(b_{i}) - a_{i-1})^2 \\[2mm]
\ge \, & \dfrac12 \sum_{i=0}^n h_i \left(\alpha_i(b_i) - \alpha_{i}(b_{i+1})\right)^2 + \dfrac12 \sum_{i=0}^n \tilde{h}_i \left(a_i - a_{i-1}\right)^2
= \dfrac12 \sum_{i=0}^n h^3_i \left(\sum_{j=0}^i\alpha_j\right)^2 + \dfrac12 \sum_{i=0}^n \tilde{h}_i \beta_i^2.
\end{align*}
Now, the first inequality in \cref{l-b} is a direct consequence of \cref{quad-HS} and the following inequality
\begin{equation}\label{commonInequality}
|\balpha|^2 = \sum_{i=0}^n \left(\sum_{j=0}^i \alpha_j - \sum_{j=0}^{i-1}\alpha_j\right)^2
\leq 2\sum_{i=0}^n \left(\sum_{j=0}^i \alpha_j\right)^2 
  + 2\sum_{i=0}^n \left(\sum_{j=0}^{i-1} \alpha_j\right)^2
\leq 4\sum_{i=0}^n \left(\sum_{j=0}^i \alpha_j\right)^2.
\end{equation}

To prove the validity of the second inequality in \cref{l-b}, let  $\delta^{\epsilon+}(t)$ and $\delta^{\epsilon-}(t)$ be approximations to the Dirac delta function such that 
\begin{equation*}
\delta^{\epsilon+}(t)=
    \begin{cases}
        \dfrac{1}{\epsilon + \rho}, & \quad t\in [ - \rho/2 ,\epsilon + \rho/2],\\
        
        0, & \quad\text{otherwise},
    \end{cases}
    \quad \mbox{and} \quad \delta^{\epsilon-}(t)=
    \begin{cases}
        \dfrac{1}{\epsilon + \rho}, & \quad t\in [ - \rho/2 -\epsilon, \rho/2],\\
        
        0, & \quad\text{otherwise},
    \end{cases}
\end{equation*}
 for any $\epsilon>0$ and $\rho \in (0, h_{\text{min}})$.

For each $i = 1, \dots, n$, let $\epsilon_i =  \tau h_i/2$,  $\delta_i^{+}(x) = \delta^{\epsilon_i+}(x-b_i)$, and $\delta_i^{-}(x) = \delta^{\epsilon_{i-1}-}(x-b_i)$.  Set
\begin{align*}
&\hat{\alpha}_n(x) = \sum_{i=0}^n \alpha_i H_i(x), \quad
\hat{\beta}_n(x) = \sum_{i=1}^n \beta_i \delta_i(x),\\[2mm] &\mbox{and} \quad 
\beta_{+}(x) = \sum_{i=1}^n w(b_i)\beta_i \delta_i^{+}(x), \quad \beta_{-}(x) = \sum_{i=1}^n w(b_i)\beta_i \delta_i^{-}(x) .
\end{align*}
For all $i = 0, \dots, n$ and $j = 1, \dots, n$, it is easy to check that
\[ 
\int_{0}^1 w(x) H_i(x) \delta_j(x)\,dx = \frac{1}{2}\left(\int_{0}^1 w(b_j) H_i(x) \delta_j^{+}(x)\,dx + \int_{0}^1 w(b_j) H_i(x) \delta_j^{-}(x)\,dx\right),
\] 
which, together with multiplying by $\alpha_i\beta_j$ and summing over $i$ and $j$, implies 
\[ 
\int_{0}^1 w(x)\hat{\alpha}_n(x)\hat{\beta}_n(x)\,dx = \frac{1}{2}\left(\int_{0}^1 \hat{\alpha}_n(x)\beta_{+}(x)\,dx + \int_{0}^1 \hat{\alpha}_n(x)\beta_{-}(x)\,dx\right).
\] 
Then the quadratic form of $\mathbb{H}_{\Lambda}(w;\btheta)$ is bounded below by
\begin{align}
&\notag
 \left(\balpha^T, \bbeta^T\right) \mathbb{H}_{\Lambda}(w;\btheta) 
\begin{pmatrix} \balpha \\ \bbeta \end{pmatrix}
= \int_{0}^1 w(x)\hat{\alpha}_n^2(x)\,dx 
 - 2\int_{0}^1 w(x)\hat{\alpha}_n(x)\hat{\beta}_n(x)\,dx \\[2mm]\notag
\geq\, & w_0 \!\int_{0}^1\! \hat{\alpha}_n^2(x)dx 
 \!-\!\int_{0}^1\! \hat{\alpha}_n(x)\beta_{+}(x)\,dx  -\!\int_{0}^1\! \hat{\alpha}_n(x)\beta_{-}(x)\,dx
\\[2mm]\label{H_ll}=\!& \,\frac{1}{2}\int_0^1\! \Bigl(\sqrt{w_0}\hat{\alpha}_n(x) \! -\! \dfrac{1}{\sqrt{w_0}}\beta_{+}(x)\Bigr)^2\!dx + \frac{1}{2}\int_{0}^1\Bigl(\sqrt{w_0}\hat{\alpha}_n(x) \! -\! \dfrac{1}{\sqrt{w_0}}\beta_{-}(x)\Bigr)^2 dx\!\\[2mm]\notag&-\!\frac{1}{2w_0}\left(\int_{0}^1 \!\beta_{+}^2(x) + \beta_{-}^2(x)\,dx\right).
\end{align}
To bound the first term below, let $\beta_{n+1} = c_{n+1} = \beta_{0}=0$ and $\epsilon_0 = \frac{h_0}{2}$. It follows from the facts that $\hat{\alpha}_n(x)$ and $\beta_+(x)$ are piecewise constants, $h_i - \epsilon_i =  \epsilon_i + (1-\tau)h_i$ and  \cref{commonInequality} that  
\begin{align}
&\notag
\int_0^1 \Bigl(\sqrt{w_0}\hat{\alpha}_n(x) \! -\! \dfrac{1}{\sqrt{w_0}}\beta_+(x)\Bigr)^2 \!dx 
\\[2mm]\notag\, &\geq \sum_{i=0}^n \!\left(\int_{b_i}^{b_i  +  \epsilon_i + \rho/2} + 
\int_{b_i+\epsilon_i + \rho/2 }^{b_{i+1}-\rho/2} \right)\! \Bigl(\sqrt{w_0}\hat{\alpha}_n(x) \! -\! \dfrac{1}{\sqrt{w_0}}\beta_+(x)\Bigr)^2 dx \\[2mm]
\notag
=\, & \sum_{i=0}^n \left(\epsilon_i+\frac{\rho}{2}\right)\left(\sqrt{w_0}\sum_{j=0}^i \alpha_j - \dfrac{w(b_i)\beta_i}{(\epsilon_i+\rho)\sqrt{w_0}} \right)^2 + \sum_{i=0}^n (h_i - \epsilon_i -\rho)\left(\sqrt{w_0}\sum_{j=0}^i \alpha_j\right)^2 \\[2mm]
\notag 
\geq \, &  \sum_{i=0}^n \dfrac{\epsilon_i}{2} \left(\dfrac{w(b_i)\beta_i}{(\epsilon_i+\rho)\sqrt{w_0}}\right)^2 
 + \sum_{i=0}^n \left[(1-\tau)h_i - \rho\right]\left(\sqrt{w_0}\sum_{j=0}^i \alpha_j\right)^2 
\\[2mm]\label{H_lll}
\geq \, &\frac{1}{2w_0}\sum_{i=1}^n\frac{ \epsilon_i w^2(b_i)\beta_i^2}{(\epsilon_i + \rho)^2} 
  + w_0\frac{(1-\tau)h_{\text{min}} - \rho}{4}|\balpha|^2.
\end{align}
Similarly, for the second term in \cref{H_ll}, following an argument analogous to the one used for inequality \cref{H_lll}, we obtain 
\begin{align}
    &\notag
\int_0^1 \Bigl(\sqrt{w_0}\hat{\alpha}_n(x) \! -\! \dfrac{1}{\sqrt{w_0}}\beta_-(x)\Bigr)^2 \!dx 
\\[2mm]\notag\, &\geq \sum_{i=1}^{n+1} \!\left(\int_{b_{i-1}+\rho/2}^{b_{i-1}  -  \epsilon_{i-1} - \rho/2} + 
\int_{b_i-\epsilon_{i-1} - \rho/2 }^{b_{i}} \right)\! \Bigl(\sqrt{w_0}\hat{\alpha}_n(x) \! -\! \dfrac{1}{\sqrt{w_0}}\beta_-(x)\Bigr)^2 dx \\[2mm]\label{H_llll}\, &\geq \, \frac{1}{2w_0}\sum_{i=1}^n\frac{ \epsilon_{i-1} w^2(b_i)\beta_{i}^2}{(\epsilon_{i-1} + \rho)^2} 
  + w_0\frac{(1-\tau)h_{\text{min}} - \rho}{4}|\balpha|^2.
\end{align}
Since $\displaystyle\int_{0}^1 \!\beta_+^2(x)dx= \sum\limits_{i=1}^n (\epsilon_i+ \rho)^{-1} w^2(b_i)\beta_i^2$ and $\displaystyle\int_{0}^1 \!\beta_-^2(x)dx= \sum\limits_{i=1}^n (\epsilon_{i-1}+ \rho)^{-1} w^2(b_i)\beta_i^2$, after letting $\rho \rightarrow0$, it follows from \cref{H_ll}, \cref{H_lll} and \cref{H_llll} that  
\begin{align*}\label{H1ll}
    \left(\balpha^T, \bbeta^T\right) \mathbb{H}_{\Lambda}(w;\btheta) 
\begin{pmatrix} \balpha \\ \bbeta \end{pmatrix} \geq\dfrac{w_0 (1-\tau)h_{\textup{min}}}{4} |\balpha|^2 -  \dfrac{1}{2\tau w_0}\sum\limits_{i=1}^n  w(b_i)^2\left(h_{i-1}^{-1} + h_{i}^{-1}\right)\beta_i^2, 
\end{align*}
and the lemma is proved.
\end{proof}

\begin{lemma}\label{lemma_spd}
    For all $i \in \{ 1, \dots, n\}$, assume that $c_i \neq 0$ and that
    \begin{equation}\label{convCondGeneral}
       F_i^2(\btheta) < \left\{\begin{array}{ll} 
    c_i^2\left(\dfrac{r_0h^3_{\min}}{96} + \dfrac{\mu(1 - \tau) h_{\min}}{4} \right)\left(\dfrac{g_i}{c_i} + \dfrac{r_0\tilde{h}_i}{24} - \dfrac{a^2(b_i)}{2\tau \mu }\left(h_{i-1}^{-1} + h_{i}^{-1}\right) \right) ,   & \mbox{\textup{DR}},  \\[3mm]
      c_i^2\left(\dfrac{r_0h^3_{\min}}{96}  \right)\left(\dfrac{g_i}{c_i} + \dfrac{r_0\tilde{h}_i}{24} \right)  , &  \mbox{\textup{LS}},
\end{array}\right.  
    \end{equation}
for some $0<\tau <1$, then $\nabla^2_{\scriptsize{\btheta}} F(\btheta) =\nabla^2_{\scriptsize{\btheta}}J(u_n(\cdot;\btheta))$ is SPD.
\end{lemma}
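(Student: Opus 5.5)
We start from the decomposition \cref{Hessian-theta} of the Hessian and compute its quadratic form. Take an arbitrary nonzero vector $(\balpha^T,\bbeta^T)^T$ with $\balpha\in\mathbb{R}^{n+1}$ and $\bbeta\in\mathbb{R}^n$. Set $\hat\bbeta=\bD(\hat\bc)\bbeta$, so that $\hat\beta_i=c_i\beta_i$; since every $c_i\neq 0$, this change of variables is a bijection. The congruence by ${\cal D}(\hat\bc)$ then turns the middle term into $(\balpha^T,\hat\bbeta^T)\mathbb{H}_{\Sigma}(r;\btheta)(\balpha,\hat\bbeta)^T$ for LS. For DR it becomes the same form with $\mathbb{H}_{\Sigma}(r;\btheta)+\mathbb{H}_{\Lambda}(a;\btheta)$. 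The remaining terms contribute as follows.
\begin{itemize}
\item The $\gamma$ term in DR is $\gamma(\bd^T\balpha-\hat\bc^T\bbeta)^2\ge 0$, so we drop it.
\item ${\cal D}_0$ contributes $\sum_i c_i g_i\beta_i^2=\sum_i (g_i/c_i)\hat\beta_i^2$.
\item $\mathbb{R}(\btheta)$ contributes $2\sum_{i=1}^n F_i\alpha_i\beta_i=2\sum_i (F_i/c_i)\alpha_i\hat\beta_i$, using $F_0=0$, so $\alpha_0$ does not appear.
\end{itemize}

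Next we apply \cref{QuadLS}. In the LS case we use $w=r$ and $w_0=r_0$. In the DR case we also use $w=a$ and $w_0=\mu$ with the given $\tau$; the $a$-term carries the weight $a(b_i)^2$, which matches the hypothesis. This bounds the quadratic form below by
\[
\sum_{i=0}^n A\,\alpha_i^2+\sum_{i=1}^n \Bigl(B_i+\frac{g_i}{c_i}\Bigr)\hat\beta_i^2+2\sum_{i=1}^n\frac{F_i}{c_i}\alpha_i\hat\beta_i .
\]
Here $A=\frac{r_0h_{\min}^3}{96}+\frac{\mu(1-\tau)h_{\min}}{4}$ for DR and $A=\frac{r_0h_{\min}^3}{96}$ for LS. Likewise $B_i=\frac{r_0\tilde h_i}{24}-\frac{a^2(b_i)}{2\tau\mu}(h_{i-1}^{-1}+h_i^{-1})$ for DR and $B_i=\frac{r_0\tilde h_i}{24}$ for LS. The bound splits into decoupled $2\times2$ quadratic forms in $(\alpha_i,\hat\beta_i)$ for $i=1,\dots,n$, plus the term $A\alpha_0^2$.

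Each $2\times2$ block $\begin{pmatrix}A & F_i/c_i\\ F_i/c_i & B_i+g_i/c_i\end{pmatrix}$ is positive definite exactly when $A>0$ and $A(B_i+g_i/c_i)>F_i^2/c_i^2$. The second condition is precisely \cref{convCondGeneral} after multiplying by $c_i^2$. The first condition also implies $B_i+g_i/c_i>0$, since $F_i^2\ge 0$ and $A>0$ force it. We have $A>0$ because $h_{\min}>0$, and because $r_0>0$ in LS and $\mu>0$, $\tau<1$ in DR. The $\alpha_0$ term is then $A\alpha_0^2$, which is positive whenever $\alpha_0\neq0$. So the quadratic form is strictly positive for every nonzero vector. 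Symmetry is clear from \cref{Hessian-theta}, so $\nabla^2_{\btheta}F(\btheta)$ is SPD.

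The main obstacle is the bookkeeping at the congruence step. We must check that $\mathbb{R}(\btheta)$ and ${\cal D}_0$, which sit outside the ${\cal D}(\hat\bc)$ sandwich, are correctly rewritten in the scaled variable $\hat\bbeta$. We must also confirm that the index $i$ of $\alpha_i$ pairs with $\beta_i$, given that ${\cal R}$ is $(n+1)\times n$ with $F_0=0$. A further point is the DR case with $r_0=0$, where $A>0$ relies solely on the $\mu$ term. Note that \cref{QuadLS} gives bounds in terms of $|\balpha|^2$, which is exactly the form needed to distribute $A\alpha_i^2$ to each block.
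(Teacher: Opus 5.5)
Your proposal is correct and follows the paper's proof essentially step for step: the substitution $\hat{\bbeta}=\bD(\hat{\bc})\bbeta$ in \cref{Hessian-theta}, dropping the nonnegative $\gamma$ term, applying \cref{QuadLS} with $(w,w_0)=(r,r_0)$ and $(a,\mu)$, and concluding with the $2\times 2$ positivity criterion for each pair $(\alpha_i,\hat\beta_i)$. Your explicit handling of the leftover term $A\alpha_0^2$, and your consistent use of the factor $2\tau\mu$, are slightly more careful than the paper's write-up, which drops $\alpha_0^2$ and writes an equality where only an inequality holds.
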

\begin{proof}For any $\balpha = (\alpha_0, \alpha_1, \dots, \alpha_n)^T \in \mathbb{R}^{n+1}$ and $\bbeta = (\beta_1, \dots, \beta_n)^T \in \mathbb{R}^n$, let $\hat{\bbeta} = \bD(\hat{\bc}) \bbeta$. 
For the diffusion-reaction problem, it follows from \cref{Hessian-theta}, the fact that $\left(\balpha^T\bd - \bbeta^T\hat{\bc} \right)^2\ge 0$, \cref{l-b}, and the assumptions on the lower bounds of $a(x)$ and $r(x)$ that 
\begin{align*}
& (\balpha^T, \bbeta^T) \nabla^2_{\scriptsize{\btheta}}F(\btheta) \begin{pmatrix} \balpha \\ \bbeta \end{pmatrix}  \\[2mm]=\, &\,(\balpha^T, {\bbeta}^T) \mathbb{R}(\btheta)\begin{pmatrix} \balpha \\ \bbeta \end{pmatrix}+ (\balpha^T, \hat{\bbeta}^T) \mathbb{H}_{\Sigma+\Lambda}(\btheta) \begin{pmatrix} \balpha \\ \hat{\bbeta} \end{pmatrix} + \hat{\bbeta}^T \bD(\bg) \bD^{-1}(\hat{\bc}) \hat{\bbeta} + \gamma\, \left(\balpha^T\bd - \bbeta^T\hat{\bc} \right)^2 \\[2mm]
 \ge \, &\, (\balpha^T, {\bbeta}^T) \mathbb{R}(\btheta)\begin{pmatrix} \balpha \\ \bbeta \end{pmatrix} + (\balpha^T, \hat{\bbeta}^T) \mathbb{H}_{\Sigma+\Lambda}(\btheta) \begin{pmatrix} \balpha \\ \hat{\bbeta} \end{pmatrix} + \hat{\bbeta}^T \bD(\bg) \bD^{-1}(\hat{\bc}) \hat{\bbeta} \\[2mm]
  \ge \, &\, 2\sum_{i= 1}^n\alpha_i\beta_iF_i+  \dfrac{r_0(h_{\textup{min}})^3}{96} |\balpha|^2 + \dfrac{r_0}{24}\sum\limits_{i=1}^n\tilde{h}_i(c_i\beta_i)^2 \\[2mm]&+ \dfrac{\mu(1-\tau) h_{\textup{min}}}{4} |\balpha|^2 + \sum\limits_{i=1}^n \left(\dfrac{g_i}{c_i} -\dfrac{a^2(b_i)}{2\mu}(h_{i-1}^{-1} + h_{i}^{-1})\right)   (c_i\beta_i)^2  \\[2mm]
  = \, & \, \sum_{i =1}^n\Biggl[ \left(\dfrac{r_0h^3_{\min}}{96} + \dfrac{\mu(1-\tau) h_{\min}}{4} \right)\alpha_i^2 + 2\alpha_i\beta_iF_i + \left(\dfrac{g_i}{c_i} + \dfrac{r_0\tilde{h}_i}{24} - \dfrac{a^2(b_i)}{2\tau \mu }(h_{i-1}^{-1} + h_{i}^{-1})\right)c_i^2\beta_i^2 \Biggr].
\end{align*}   
In a similar fashion, we have
\[
(\balpha^T, \bbeta^T) \nabla^2_{\scriptsize{\btheta}}F(\btheta) \begin{pmatrix} \balpha \\ \bbeta \end{pmatrix} \ge  \sum_{i =1}^n\Biggl[ \left(\dfrac{r_0h^3_{\min}}{96} \right)\alpha_i^2 + 2\alpha_i\beta_iF_i + \left(\dfrac{g_i}{c_i} + \dfrac{r_0\tilde{h}_i}{24} \right)c_i^2\beta_i^2 \Biggr]
\]
for the least-squares approximation problem. Notice that $a_1x^2 + 2a_2xy + a_3y^2 > 0$ for all $(x,y)\not= (0,0)$ if $a_1 > 0$, $a_3 > 0$, and $a_2^2 < a_1a_3$. Now, the positive definiteness of $\nabla^2_{\scriptsize{\btheta}} F(\btheta)$ is a direct consequence of \cref{convCondGeneral}.
\end{proof}

If $c_j\not=0$, by \cref{grad-bj}, $F_j(\btheta)$ vanishes at a critical point $\btheta^*$. Hence, ${\cal R}(\btheta^*)={\bf 0}$, which gives
\begin{equation*}\label{Hessian}
    \nabla^2_{\scriptsize{\btheta}}F(\btheta^{*}) = 
    \left\{\begin{array}{ll} 
    {\cal D}(\hat{\bc}^*) \mathbb{H}_{\Sigma+ \Lambda}(\btheta^*) {\cal D}(\hat{\bc}^*) + {\cal D}_0(\btheta^*) + 
         \gamma \begin{pmatrix} \bd \\ -\hat{\bc}^* \end{pmatrix} \left( \bd^T, -(\hat{\bc}^*)^T\right),   & \mbox{DR},  \\[3mm]
     {\cal D}(\hat{\bc}^*)\mathbb{H}_{\Sigma}(r;\btheta^*) {\cal D}(\hat{\bc}^*) + {\cal D}_0(\btheta^*) , &  \mbox{LS}.
\end{array}\right.
\end{equation*}

\begin{theorem}\label{thmDR} For an open set ${\cal O} \subseteq \mathbb{R}^{2n+1}$ satisfying the invertibility assumption, let $\btheta^* = \begin{pmatrix} \bc^{*} \\ \bb^{*} \end{pmatrix} \in {\cal O}$ be a minimizer of problem \cref{min}. For all $i\in \{1, \ldots, n\}$, assume that $c^*_i \neq 0$ and that
\begin{equation}\label{convCond2}
\dfrac{g^*_i}{c^*_i}+\dfrac{r_0\tilde{h}^*_i}{24} > \left\{\begin{array}{ll}
  \dfrac{a^2(b^*_i)}{2\mu}\left(\dfrac{1}{h_{i-1}^*} + \dfrac{1}{h_{i}^{*}}\right),   &  \mbox{\textup{DR}},  \\[4mm]
   0,  & \mbox{\textup{LS}}.
\end{array}\right.
\end{equation}
Then the Hessian matrix at the critical point $\btheta^{*}$, $\nabla^2_{\scriptsize{\btheta}} F(\btheta^{*}) =\nabla^2_{\scriptsize{\btheta}}J(u_n(\cdot;\btheta^{*}))$, is SPD. Moreover, the BN method using either NL-GS or L-GS converges locally to $\btheta^{*}$ in the norm induced by $\nabla^2_{\scriptsize{\btheta}}F(\btheta^{*})$.
\end{theorem}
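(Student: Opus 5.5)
The plan has two parts. First, show that the Hessian at $\btheta^*$ is SPD by reducing to \cref{lemma_spd} (or, more directly, to \cref{QuadLS}). Then invoke \cref{main_lemma}.

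\emph{Reduction at the critical point.} Since $c_i^*\neq 0$ for every $i$, \cref{grad-bj} together with $\nabla_{\bb}F(\btheta^*)={\bf 0}$ gives $F_i(\btheta^*)=0$ for all $i$. Hence ${\cal R}(\btheta^*)={\bf 0}$, and the Hessian takes the reduced form displayed just before the theorem. In the bound of \cref{lemma_spd}, the cross terms $2\alpha_i\beta_iF_i$ then vanish. The left side of \cref{convCondGeneral} is $0$, so condition \cref{convCondGeneral} reduces to strict positivity of its right-hand side. For LS, the first factor $r_0h_{\min}^3/96$ is positive because $r_0>0$ and $h^*_{\min}>0$, since $b^*$ has strictly ordered knots inside ${\cal M}_n(I)$. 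The second factor $g_i^*/c_i^*+r_0\tilde h_i^*/24$ is positive exactly by \cref{convCond2}.

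\emph{Choosing $\tau$ for DR.} The two factors are $r_0h_{\min}^3/96+\mu(1-\tau)h_{\min}/4$, which is positive for any $\tau\in(0,1)$, and
\[
\frac{g_i^*}{c_i^*}+\frac{r_0\tilde h_i^*}{24}-\frac{a^2(b_i^*)}{2\tau\mu}\Bigl(\frac{1}{h^*_{i-1}}+\frac{1}{h^*_i}\Bigr).
\]
Condition \cref{convCond2} is the strict version of this at $\tau=1$. There are finitely many indices $i$ and the inequalities are strict, so by continuity in $\tau$ there is $\tau<1$ close to $1$ for which all $n$ second factors stay positive. With this $\tau$, \cref{convCondGeneral} holds, and \cref{lemma_spd} yields SPD.

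This is the step needing the most care. The lower bound in \cref{QuadLS} involves only $|\balpha|^2$ over all indices $0,\dots,n$ (including $\alpha_0$), while the per-index summation in \cref{lemma_spd} drops $\alpha_0^2$. To be safe, I would redo the estimate directly at $\btheta^*$, using $|\balpha|^2\ge\sum_{i\ge1}\alpha_i^2$ and keeping the positive $\alpha_0^2$ term. This gives a lower bound
\[
C_1|\balpha|^2+\sum_i C_{2,i}(c_i^*\beta_i)^2
\]
with $C_1>0$ and every $C_{2,i}>0$. Since every $c_i^*\neq0$, the bound is strictly positive for $(\balpha,\bbeta)\neq 0$, with no need for the $2\times2$ discriminant argument. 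The $\gamma$ rank-one term is nonnegative and is discarded.

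\emph{Convergence.} With $\nabla^2_{\btheta}F(\btheta^*)$ SPD and $\btheta^*$ a fixed point of $G$ (a minimizer, so the gradient vanishes, and the invertibility assumption on ${\cal O}$ makes $G$ well defined), \cref{main_lemma} applies directly. It gives local convergence of NL-GS and L-GS in the $\nabla^2_{\btheta}F(\btheta^*)$-norm.
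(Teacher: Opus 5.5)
Your proposal is correct and follows the paper's own argument. You use $F_i(\btheta^*)=0$ to remove the cross terms in \cref{lemma_spd}, conclude that $\nabla^2_{\btheta}F(\btheta^*)$ is SPD, and then apply \cref{main_lemma}. Two details that the paper's one-line proof glosses over are handled correctly: your choice of $\tau<1$ using the strictness of \cref{convCond2}, which is the $\tau=1$ case of \cref{convCondGeneral}, and your keeping the $\alpha_0^2$ term that the final equality in the proof of \cref{lemma_spd} silently drops.
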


\begin{proof}This is a direct consequence of \cref{lemma_spd}, \cref{convCond2}, and the fact that $F_i(\btheta^*) = 0$ for all $i = 1, \dots, n$. 
\end{proof}

\subsection{Feasibility of BN}\label{S_Condition}

This section discusses feasibility of the BN with either NL-GS or L-GS,  or equivalently, invertibility of ${\cal H}_{11}(\btheta)$ and ${\cal H}_{22}(\btheta)$. Invertibility of ${\cal H}_{11}(\btheta)$ was shown in \cite{cai2024fast2}; moreover, ${\cal H}_{11}(\btheta)$ is SPD.

Regarding ${\cal H}_{22}(\btheta)$, the formula in \cref{H_22} uses derivative of the diffusion coefficient $a(x)$. As discussed in Section~5 of \cite{cai2024fast}, if $a(x)$ is not differential at $b_i$ for some $i\in \{1,\ldots,n\}$, then $b_i$ lies at the physical interface. This means that the (mesh) breakpoint $b_i$ is already at a good location and hence should be fixed without further update. 


Below, we discuss invertibility of ${\cal H}_{22}(\btheta)$ at the $k^{\text{th}}$ iteration. For simplicity of notation, the parameters are still denoted by $\hat{\bc}$ and $\bb$ without the superscript $(k)$. The assumption $c_i \not= 0$ for all $i \in \{1, \ldots, n\}$ is a necessary condition for invertibility of ${\cal H}_{22}(\btheta)$. During the iterative process, this condition is not always satisfied. In the case that $c_l = 0$ for some $l \in \{1, \ldots, n\}$, then ${\cal H}_{22}(\btheta)$ is singular. To deal with such singularity, similar to \cite{Cai24GN, cai2024fast, cai2024fast2}, the nonlinear parameter $b_l$ is not updated at the current step, because the corresponding neuron has no contribution to the current approximation. When this happens several times to a neuron, then we can either remove or redistribute this neuron. Based on the above discussion, we introduce the following set of indices for the non-contributing neurons
\begin{equation}\label{setS1}
    S_1 = \bigl\{i \in \{1, \dots,n\}:|c_i|<\tau_1 \quad \mbox{or}\quad b_i\notin I \bigr\},
\end{equation}
where $0\leq \tau_1<1$ is a small parameter.


Next, at each step, to update the nonlinear parameters of neurons whose indices are not in $S_1$, 
let us denote by $\hat{\cal H}_{22}(\btheta)$ the reduced Hessian after removing neurons with indices in $S_1$. For the diffusion problem, $\hat{\cal H}_{22}(\btheta)$ is invertible if and only if $g_i\not= 0$ for all $i\notin S_1$. Under the assumption that $u_n(x) \approx u(x)$, i.e., $u_n(x)$ is a good approximation to $u(x)$, it was shown in \cite{cai2024fast} that 
\begin{equation}\label{gid}
    g_i\approx a(b_i)u^{\prime\prime}(b_i),
\end{equation}
where $g_i$ is defined in \cref{gi}. As discussed in \cite{cai2024fast}, $g_i\approx 0$ implies that $b_i$ is either at a nearly optimal location or no need for update. In both cases, $b_i$ is not updated. Hence, we introduce the following set of indices for non-updating neurons
\begin{equation}\label{setS2-DR}
 S_2 =\left\{i\in\{1,\dots,n\}\backslash S_1: \dfrac{|g_i|}{a(b_i)}  \leq  \tau_{2} \quad\mbox{or}\quad a'(b_i)\quad\mbox{DNE}  \right\},
\end{equation}
where $0\leq \tau_2<1$ is a small parameter.

The above strategy does apply to the diffusion-reaction problem because \cref{gi} and \cref{pde} imply the validity of \cref{gid}:
\[ g_i = r(b_i)u_n(b_i) - f(b_i) - a^\prime(b_i)u_n'(b_i)
    \approx r(b_i)u(b_i) - f(b_i) - a'(b_i)u'(b_i)
    = a(b_i)u''(b_i).
\]
Nevertheless, the sufficient condition on the positive definiteness of ${\cal H}_{22}(\btheta)$ was given in Lemma~5.2 of \cite{cai2024fast2}:
\begin{equation}\label{spdH22}
\frac{g_i}{c_i}+\frac{r_0\tilde{h}_i}{4}>0 \quad\mbox{for all}\quad i\in \{1,\dots,n\} \setminus S_1,
\end{equation}
where $r_0\ge 0$ for DR and $r_0>0$ for LS.
Notice that the linear parameter $c_i=u_n'({b_i}^+) - u_n'({b_i}^-)$ means the change of the slope of $u_n(x)$ at $b_i$ (see section~5 of \cite{cai2024fast}). Hence, $c_i\approx u'({b_i}^+) - u'({b_i}^-)$ and $u''(b_i)$ have the same sign, which implies  
\begin{equation*}\label{gi/ci}
\dfrac{g_i}{c_i} \approx \dfrac{a(b_i) u^{\prime\prime}(b_i)}{c_i} = \dfrac{a(b_i) |u^{\prime\prime}(b_i)|}{|c_i|},    
\end{equation*}
which, together with positivity of $a(b_i)$, implies \cref{spdH22} is almost always valid.



For the LS problem, \cref{spdH22} is sufficient but not necessary for the positive definiteness of ${\cal H}_{22}(\btheta)$. Moreover, positive definiteness differs from invertibility. Since $g_i=r(b_i) \left(u_n(b_i) - u(b_i)\right)$ is assumed to be small, for implementation purposes, we do not update the $i^{th}$ neuron if $g_i/c_i < - 1/\tau_3$ is negatively large, where parameter $\tau_3 \in (0,1)$ . In this case, $|c_i| < \tau_3|g_i|$ is small if $g_i$ is small. Hence, let
\begin{equation}\label{setS2-LS}
 S_2 =\left\{i\in\{1,\dots,n\}\backslash S_1: \frac{g_i}{c_i} < 0 \,\mbox{ and }\, |c_i| < \tau_3|g_i| \right\}.
\end{equation}
Notice that the reduced Hessian $\hat{\cal H}_{22}(\btheta)$ is not guaranteed to be nonsingular after removing neurons with indices in $S_1\cup S_2$. In cases where the reduced Hessian $\hat{\cal H}_{22}(\btheta)$ is not invertible, one might consider the Gauss-Newton matrix
\[{\cal G}_{22}(\btheta) = {\cal D}(\hat{\bc})\mathbb{H}_{\Sigma}(r;\btheta) {\cal D}(\hat{\bc}),\]
which is SPD under the assumption that $c_i \neq 0$ for all $i \in \{1, \dots, n\}$.

\subsection{Implementation}\label{implementatioSection} 

In this section, we outline the implementation of the BN method using a reduced nonlinear system constructed based on the sets \cref{setS1}, \cref{setS2-DR}, and \cref{setS2-LS}. The NL-GS scheme, implemented in earlier works \cite{cai2024fast,cai2024fast2}, is used to illustrate the implementation, which is also applicable to the other two schemes described in \cref{Section_BN_method}.

To this end, let 
\begin{equation*}
    S = \{1, \dots, n\} \backslash (S_1 \cup S_2)
\end{equation*}
be the set of indices for the neurons which remain in the system, and denote by $S^c=S_1\cup S_2$ the complement of $S$, the set of indices to be removed. Next, for a given vector $\bv \in \mathbb{R}^n$, denote by $\bv_{S} \in \mathbb{R}^{n-\left\lvert S^c\right\rvert}$ as the vector obtained by removing entries whose indices belong to $S^c$, where $\left\lvert S^c\right\rvert$ denotes the number of indices in $S^c$.
Similarly, for a matrix $\bB \in \mathbb{R}^{n\times n}$, define $\bB_{S} \in \mathbb{R}^{(n-\lvert S^c\rvert)\times (n-\lvert S^c\rvert)}$ as the matrix obtained by removing the rows of columns of $\bB$ corresponding to the indices in $S^c$. 
Then the \textit{reduced} search direction vector for the Newton step is defined as
\begin{equation}\label{reduce_Newt}
     \bd_{R}(\bc, \bb)= \bd_{R}(\btheta) = -\biggl({\cal  H}_{22}(\btheta)_{S}\biggr)^{-1}\bigg(\nabla_{\bb}F(\btheta)\biggr)_{S}.
\end{equation}

 We are now ready to present the reduced block Newton (rBN) algorithm with the NL-GS scheme (see \cref{alg:dBN} for pseudocode).
 Given the previous iterate $\bb^{(k)}$. Then the current iterate $\btheta^{(k+1)} =\begin{pmatrix}  \bc^{(k+1)} \\ \bb^{(k+1)} \end{pmatrix}$ is computed as follows:
\begin{itemize}
    \item[(i)] \textit{Compute the linear parameters} 
    \begin{equation*}
    \bc^{(k+1)} = \bc^{(k)} - {\cal H}_{11}(\bc^{(k)}, \bb^{(k)})^{-1}\nabla_{\bc}F(\bc^{(k)}, \bb^{(k)})
    \end{equation*}
    \item[(ii)] \textit{Compute the search direction $\mathbf{p}^{(k)}=\left(p_1^{(k)},\dots,p_n^{(k)}\right)^T$} by
    \begin{equation}\label{directionVector}
    \bigl(p^{(k)}_i \bigr)_{i \in S} = {\bd}_R(\bc^{(k+1)}, \bb^{(k)}) \quad \mbox{and} \quad \bigl(p^{(k)}_i\bigr)_{i \notin S} = {\bf 0},
    \end{equation}
    where $\bd_R(\bc^{(k+1)}, \bb^{(k)})$ is the \textit{reduced} Newton's direction vector defined in \cref{reduce_Newt}.
    \item [(iii)] \textit{Compute the nonlinear parameters}
    \begin{equation*}
        \bb^{(k+1)} = \bb^{(k)} +\mathbf{p}^{(k)}.
    \end{equation*}
    \item [(iv)] \textit{Redistribute non-contributing breakpoints} $b_i^{(k+1)}$ for all $i \in S_1$ and sort $\bb^{(k+1)}$.   
\end{itemize}

\begin{algorithm}
    \caption{Reduced block Newton (rBN) method for  \cref{min}} \label{alg:dBN} 
    \begin{algorithmic}
    \REQUIRE{Initial network parameters $\bb^{(0)}$}
    \ENSURE{Network parameters $\bc$, $\bb$}
    \FOR{$k=0,1\ldots$}
    \STATE $\triangleright$ \textit{Linear parameters}
    \STATE{$\bc^{(k+1)}\leftarrow \bc^{(k)} - {\cal H}_{11}(\bc^{(k)}, \bb^{(k)})^{-1}\nabla_{\bc}F(\bc^{(k)}, \bb^{(k)})$}
    \STATE $\triangleright$
    \textit{nonlinear parameters}
    \STATE{{\textit{Compute the search direction} }$\mathbf{p}^{(k)}$ \textit{as in}} \cref{directionVector}
    \STATE{$\bb^{(k+1)} \leftarrow \bb^{(k)} + \mathbf{p}^{(k)}$}
    \STATE $\triangleright$ \textit{Redistribute non-contributing neurons and sort $\bb^{(k+1)}$}
\ENDFOR
    \end{algorithmic}
\end{algorithm}

\begin{remark}\label{rmk_redist}
    The redistribution implemented for the numerical results shown in {\em \cite{cai2024fast, cai2024fast2}} in step {\em (iv)} was carried out as follows: For a neuron $b_l^{(k+1)}$ satisfying $l \in S_1$, we set 
    \[
    b_l^{(k+1)} \leftarrow\frac{b^{(k+1)}_{m-1}+b^{(k+1)}_{m}}{2},
    \]
    where $m\in\{1,\dots,n+1\}$ is an integer chosen uniformly at random.
\end{remark}

\subsubsection{Numerical 
Experiment}
To illustrate the performance of the reduced BN method outlined above, consider a singularly perturbed reaction-diffusion equation:
\begin{equation}\label{pde2}
    \left\{\begin{array}{lr}
        -\varepsilon^2u^{\prime\prime}(x) +u(x)=f(x), & x\in (-1,1),\\[2mm]
        u(-1) = u(1) = 0.
    \end{array}\right.
\end{equation}
 For $f(x) = -2\left(\varepsilon -4x^2\tanh{\left(\frac{1}{\varepsilon}(x^2 - \frac{1}{4})\right)}\right)\left(1/\cosh{\left( \frac{1}{\varepsilon}(x^2 - \frac{1}{4})\right)}\right)^2 + \tanh{\left(\frac{1}{\varepsilon}(x^2 - \frac{1}{4})\right)} - \tanh{\left(\frac{3}{4\varepsilon}\right)}$, problem \cref{pde2} has the following exact solution:
\begin{equation}\label{ex2pde}
    u(x) = \tanh{\left(\frac{1}{\varepsilon}(x^2 - \frac{1}{4})\right)} - \tanh{\left(\frac{3}{4\varepsilon}\right)}.
\end{equation}
Denote by $|\cdot|_1$ the $H^1$ seminorm on $(-1, 1)$. For $\nu = \varepsilon^2 = 10^{-6}$, the solution of \cref{ex2pde} exhibits sharp interior layers. It is well-known that overshooting and oscillations occur when using continuous piecewise linear approximation on a uniform partition (see \cref{fig2a} for $n = 16$). Using these uniform mesh points as an initial for the nonlinear parameters, 
$100$ iterations of the BN method moves them efficiently toward the interior layers, and the resulting approximation is greatly improved (see \cref{fig2b}). This example demonstrates importance of non-uniform mesh in approximation and efficiency of the BN method in non-convex optimization. 
For more numerical experiments, we refer readers to \cite{cai2024fast} and \cite{cai2024fast2}.

\begin{figure}[htbp]
    \centering
    \begin{subfigure}[b]{0.45\textwidth}
        \centering
        \includegraphics[width=\textwidth]{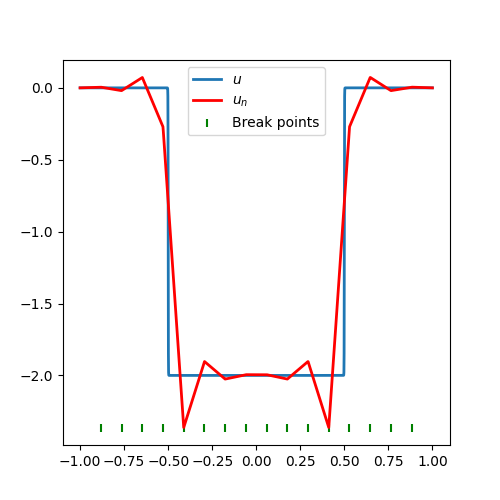}
        \caption{}\label{fig2a}
    \end{subfigure}
    \hfill
    \begin{subfigure}[b]{0.45\textwidth}
        \centering
        \includegraphics[width=\textwidth]{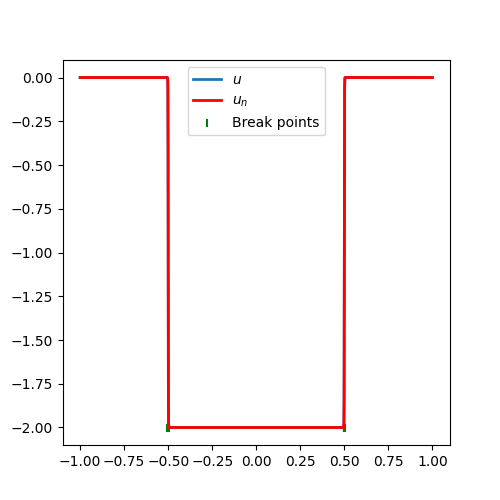}
        \caption{}\label{fig2b}
    \end{subfigure}
    \caption{For $\nu = \varepsilon^2 = 10^{-6}$: (a) initial NN model with 16 uniform breakpoints; $\frac{|u-u_n|_{1}}{|u|_1} = 0.988$, (b) optimized NN model with 16 breakpoints, 100 iterations, $\frac{|u-u_n|_{1}}{|u|_1} =0.173$.}\label{example2DR}
\end{figure}

\subsection{Reduced nonlinear System}\label{sec:rBN} This section studies the rBN method introduced in \cref{implementatioSection}. 
As explained in \cref{implementatioSection}, the iterative process may fix the location of certain breakpoints. We analyze the local convergence of the method under the assumption that no additional breakpoints will be fixed after a finite number of iterations. 

To this end, for a given $\alpha \in \mathbb{R}$, assume that there are $k < n$ fixed breakpoints denoted by
\[
0 =\tilde{b}_0< \tilde{b}_1 < \cdots < \tilde{b}_k < 1.
\]
Define the set of shallow ReLU neural networks with $k$ fixed and $n-k$ moving breakpoints as follows:
\begin{equation*}\label{fixed_NN}
    \mathcal{M}_{n,k}(I) =
    \left\{
        \alpha +
        \sum_{i = 0}^k c_i \sigma(x - \tilde{b}_i) +
        \sum_{j = 1}^{n-k} c_{k+j} \sigma(x - b_j)
        : c_{l} \in \mathbb{R},
        \; 0 < b_{1} < \cdots < b_{n-k} < 1
    \right\}.
\end{equation*}
For any $u_{n,k}(x) \in \mathcal{M}_{n,k}(I)$, denote the corresponding parameters by
\[
    \btheta_R =
    \begin{pmatrix}
        \bc \\[1mm]
        \bb_R
    \end{pmatrix}
    \in \mathbb{R}^{2n+1-k}
\]
where $\bc = (c_0, \dots, c_n)^T \in \mathbb{R}^{n+1}$ are again the linear parameters and $\bb_R = (b_1, \dots, b_{n-k})^T \in \mathbb{R}^{n-k}$ are the nonlinear parameters. After a certain number of the BN iterations, the optimization becomes to 
seek $u_{n,k}^*(x) = u_{n,k}(x; \btheta_R^*) \in \mathcal{M}_{n,k}(I)$ such that
\begin{equation*}
    u_{n,k}^*(x) = u_{n,k}^*(x; \btheta^*_R) = \argmin_{u_{n,k} \in \mathcal{M}_{n,k}(I)}
    J\!\left(u_{n,k}(\cdot; \btheta_R)\right).
\end{equation*}

The analysis presented in the preceding sections extends to the rBN method. 
Furthermore, following a similar reasoning as in \cref{thmDR}, if $c^*_{k+i} \neq 0$ and
\begin{equation*}
\dfrac{g^*_i}{c^*_{i+k}}+\dfrac{r_0\tilde{h}^*_i}{24} > \left\{\begin{array}{ll}
  \dfrac{a^2(b^*_i)}{2\mu}\left(\dfrac{1}{h_{i-1}^*} + \dfrac{1}{h_{i}^{*}}\right),    &  \mbox{DR},  \\[4mm]
   0,  & \mbox{LS}.
\end{array}\right.
\end{equation*}
for all $i \in \{1, \dots, n-k\}$, then the BN method applied to this reduced parameter set converges locally to $\btheta_R^* = \begin{pmatrix} \bc^{*} \\ \bb_R^{*} \end{pmatrix}$.

\begin{remark}
In \cref{Section_convergence}, we presented the local convergence analysis assuming the dimensions of the linear and nonlinear parameters are $n+1$ and $n$, respectively. For the rBN method discussed here, the dimensions are $n+1$ and $n-k$. However, the proofs of \cref{main_lemma} and the other lemmas in that section do not depend on the specific dimension of the parameter space. Therefore, the local convergence analysis extends directly to the rBN method. 
\end{remark}

\section{Conclusion}\label{sec:conc} A local convergence analysis for the BN methods including the reduced BN (rBN) method is presented in this paper. 
The rBN differs from the common optimization methods in reducing the number of parameters during the optimization process. We established a local convergence of the BN methods under some reasonable contiditions for one-dimensional diffusion–reaction problems and least-squares function approximation. The analysis may be extended to higher dimensions for shallow neural network approximation problems.




\bibliographystyle{plain}
\bibliography{ref}

\end{document}